\theoremstyle{plain}
\newtheorem{thm}{Theorem}[section]
\newtheorem{lem}[thm]{Lemma}
\newtheorem{obs}[thm]{Observation}
\newtheorem{prop}[thm]{Proposition}
\newtheorem{cor}[thm]{Corollary}
\newtheorem*{claim*}{Claim}
\theoremstyle{definition}
\newtheorem{defi}[thm]{Definition}
\newtheorem{ex}[thm]{Example}
\theoremstyle{remark}
\newtheorem{rem}{Remark}[thm]
\newtheorem{step}{Step}
\newcommand{\R}{\mathbb{R}}
\newcommand{\B}{\mathbb{B}}
\newcommand{\s}{\mathbb{S}}
\newcommand{\RR}{\mathcal{R}}
\newcommand{\CC}{\mathcal{C}}
\newcommand{\I}{\mathcal{I}}
\newcommand{\Z}{\mathcal{Z}}
\newcommand{\codim}{\textnormal{codim}}
\newcommand{\id}{\textnormal{id}}
\newcommand{\im}{\textnormal{im}}
\title{Relative Stone-Weierstrass theorem for mappings between varieties}
\author{Juliusz Banecki}
\affil{Faculty of Mathematics and Computer Science,
Jagiellonian University,
ul. Lojasiewicza 6, 30-348 Krakow, Poland\\
\texttt{juliusz.banecki@student.uj.edu.pl} \\
 https://orcid.org/0009-0004-5604-4157}
\date{}
\begin{document}
\maketitle

\renewcommand{\thefootnote}{\fnsymbol{footnote}} 
\footnotetext{\emph{Math subject classification[2020]:} 	14P99}     
\renewcommand{\thefootnote}{\arabic{footnote}} 

\begin{abstract}
We introduce a class of real algebraic varieties characterised by a simple rationality condition, which exhibit strong properties regarding approximation of continuous and smooth mappings by regular ones. They form a natural counterpart to the classes of malleable and quasi-malleable varieties. The approximation property studied here is stronger than those considered before and allows us to deduce non-trivial facts about extensions of regular mappings between varieties.
\end{abstract}

\section{Introduction}\label{sec:intro}
In this paper we deal with \emph{real affine varieties}, defined as in \cite{RAG} or \cite{Mangolte}. Note that in this sense the real projective space $\mathbb{P}^n(\R)$ is an affine variety. Morphisms of such varieties are called \emph{regular maps}. The ring of real valued regular functions of an affine variety $X$ is denoted by $\RR(X)$, while the field of rational functions of an irreducible affine variety is denoted by $\R(X)$.

Throughout the paper we will often deal with mappings defined on arbitrary subsets of real affine varieties, and for this reason we expand the definition of a regular mapping in the following way:

Let $S$ be an arbitrary subset of a real affine variety $X$, and let $Y$ be another affine variety. We say that a mapping $\varphi:S\rightarrow Y$ is regular, if there exists a set $S_0\subset X$ locally closed in Zariski topology, containing $S$ as a subset, and a regular mapping (in the usual sense) $\varphi_0:S_0\rightarrow Y$ such that $\varphi_0\vert_S=\varphi$. 

Recently, real algebraic geometry has seen significant progress in dealing with problems regarding approximation of continuous or smooth mappings between varieties by regular mappings. The most important tools in this area are the classes of \emph{malleable} and \emph{quasi-malleable} varieties, introduced in \cite[Definition 2.1]{real_homogeneous} and \cite[Definition 1.2]{Banecki+Kucharz} respectively. These classes are defined in a rather technical way, so we will not repeat those definitions here. Nonetheless we would like to note the following characterisation of malleable varieties, which highlights their importance in the theory:

\begin{thm}[{\cite[Theorem 1.6]{Banecki+Kucharz}}]\label{mal_char}
Let $Y$ be a malleable variety. Then the following approximation property holds true:

Let $X$ be a non-singular affine variety, $Z\subset X$ a non-singular Zariski closed subvariety of $X$ and $f:X\rightarrow Y$ a smooth mapping. Assume that $f$ is homotopic to a regular mapping $g:X\rightarrow Y$ through a homotopy relative to the set $Z$. Then $f$ can be approximated in the (weak) $\mathcal{C^\infty}$ topology by regular mappings $\widetilde{f}:X\rightarrow Y$, agreeing with $f$ on $Z$.

Moreover this property characterises malleable varieties among all non-singular affine varieties.
\end{thm}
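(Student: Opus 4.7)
The plan is to prove the equivalence in two directions: first that malleability implies the relative approximation property, then that a non-singular affine variety enjoying the property must be malleable.

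For the forward direction, I would start from the smooth homotopy $H\colon X\times [0,1]\to Y$ connecting $g$ at $t=0$ with $f$ at $t=1$, stationary on $Z$. The strategy is to use malleability of $Y$ to convert the relative approximation problem for $f$ into an absolute one. Roughly, malleability supplies, after embedding $Y$ into an ambient affine space, a regular retraction-like structure that converts sufficiently small smooth perturbations of regular maps into $Y$ back into regular maps into $Y$. Concretely, I would first exploit the homotopy together with a tubular neighborhood construction around the graph of $g$ in $X\times Y$ to replace $f$ with a smooth map $f'$ that equals $g$ outside an arbitrarily small neighborhood of the locus where $f$ and $g$ differ and that lies in a neighborhood of $g$ on which the malleable retraction is defined. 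Then one writes $f'=r\circ(g+\varepsilon)$ for a small smooth ambient section $\varepsilon$, approximates $\varepsilon$ by regular sections, and composes back with the malleable retraction $r$ to obtain regular approximants $\widetilde f\colon X\to Y$.

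The main obstacle is enforcing exact agreement with $f$ on $Z$ while retaining approximation in the weak $\mathcal{C}^\infty$ topology. Naively multiplying the correction $\varepsilon$ by a power of a regular function defining $Z$ in $X$ only guarantees agreement up to finite order and costs $\mathcal{C}^\infty$ control. The cure is to approximate $\varepsilon$ itself by regular sections vanishing on $Z$ to all orders; since $\varepsilon$ already vanishes identically on $Z$ (because $f'$ and $g$ agree there), a relative version of the polynomial approximation theorem for sections of algebraic vector bundles - combined with multiplication by a regular function that is strictly positive off $Z$ and vanishes on $Z$ to suitable order - should give the required approximants that agree with $g$, hence with $f$, pointwise on $Z$. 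Some care is also needed because the ambient embedding of $Y$ and the tubular neighborhood must be chosen so that the construction is compatible with the homotopy data on $Z$.

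For the converse, the aim is to recover the technical axioms of malleability from the relative approximation property. The idea is to run the statement on carefully engineered test data built out of $Y$ itself: one takes $X$ to be a suitable non-singular variety containing a copy of $Y$ (for example a product $Y\times \R^n$, a mapping cylinder of a regular map into $Y$, or a blow-up arranged to present an arc of deformations), chooses $Z$ to encode the points at which the malleability condition must be tested, and picks $f$ to be a canonical smooth mapping whose regular approximants agreeing with $f$ on $Z$ immediately supply the regular objects demanded by \cite[Definition 2.1]{real_homogeneous}. The verification that the homotopy hypothesis is automatically satisfied in such a test configuration is the only subtle point, and it should follow because the test $f$ is constructed from a regular map by an obviously contractible deformation fixed on $Z$.
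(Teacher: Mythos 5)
First, a structural point: the paper does not prove Theorem \ref{mal_char} at all. It is quoted verbatim from \cite[Theorem 1.6]{Banecki+Kucharz} and used as a known input, so there is no internal proof to compare your attempt against. Judged on its own terms, your sketch has a genuine gap in the forward direction. You base the whole construction on a ``regular retraction-like structure'' supplied by malleability, writing $f'=r\circ(g+\varepsilon)$ and ``composing back with the malleable retraction $r$''. Malleability, as defined in \cite[Definition 2.1]{real_homogeneous}, provides a dominating spray on an algebraic vector bundle over $Y$, not a regular retraction of an ambient (Zariski or Euclidean) neighbourhood onto $Y$; the existence of such a retraction is essentially retract rationality, which is strictly stronger. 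Indeed, Section \ref{sec:examples} exhibits a malleable cubic surface that is not retract rational (Observation \ref{rr_connected} together with Proposition \ref{cubic_is_malleable}), so the mechanism you propose is simply not available for a general malleable $Y$. The correct mechanism replaces $r\circ(g+\varepsilon)$ by $s(h,\psi)$, where $s$ is the spray, $h$ is a regular map already close to $f$, and $\psi$ is a regular section vanishing on $Z$ that approximates the smooth spray-coordinate of $f$ relative to $h$; this is exactly the pattern the present paper follows for strongly malleable varieties in Proposition \ref{cont_app_prop} and in the proof of Theorem \ref{smooth_app_thm}.

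A second gap concerns the use of the homotopy. You use it only to replace $f$ by a map $f'$ equal to $g$ outside a small neighbourhood of the locus where $f$ and $g$ differ; but that locus can be all of $X\setminus Z$, and being homotopic to $g$ rel $Z$ gives no pointwise closeness to $g$ whatsoever, so no single tubular-neighbourhood correction can bridge the gap between $f$ and $g$. The homotopy must be used dynamically, for instance by subdividing $[0,1]$ finely enough that consecutive stages are close and propagating regularity step by step via the spray. Your treatment of the interpolation on $Z$ (approximating the correction by regular sections vanishing on $Z$, where non-singularity or faithfulness of $Z$ is what makes this possible) and your outline of the converse are in the right spirit, but the converse as written is too vague to check; for comparison, the analogous converse in this paper (Observation \ref{property=>urr}) tests the property on a semialgebraic retraction of a Euclidean neighbourhood of $Y$ restricted to a small closed ball, with $Z=Y$.
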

For the definition of the $\CC^\infty$ topology see \cite[p. 36]{differential_top}.

There are other versions of Theorem \ref{mal_char}, and a similar characterisation in terms of an approximation property exists in the case of quasi-malleable varieties; see \cite{Banecki+Kucharz} for more details. The current paper aims to develop this line of research even further, introducing a new class of \emph{uniformly retract rational} varieties, which admit approximation properties stronger than malleable varieties, and which at the same time turn out to be characterised by a simple local rationality-type condition. To introduce our results we first need to define this class of varieties:

\begin{defi}
Let $Y$ be a real affine variety.

We say that $Y$ is \emph{globally retract rational} if there exist a Zariski open subset $W\subset\R^n$ for some $n$ and two regular mappings
\begin{align*}
    i:Y\rightarrow W \\
    r:W\rightarrow Y
\end{align*}
such that $r\circ i=\id_Y$.

We say that $Y$ is \emph{retract rational} if there exists a Zariski open and dense set $V\subset Y$ which is globally retract rational.

Finally we say that $Y$ is \emph{uniformly retract rational} if for every point $y\in Y$ there exists a Zariski open neighbourhood $V$ of $y$ in $Y$ which is globally retract rational.
\end{defi}

While retract rational varieties have been studied since the 1980s, the concept of uniformly retract rational ones, even though natural, seems to be a new one. 

Our main result is the following equivalence between uniform retract rationality and a certain approximation-interpolation property:

\begin{thm}\label{cont_app_thm}
Let $Y$ be a uniformly retract rational affine variety. Then the following approximation property holds true:

Let $X$ be an affine variety, $Z\subset X$ be a Zariski closed subvariety of $X$, $C\subset X$ be a closed and semialgebraic set such that $Z\cap C$ is compact, and $f:C\rightarrow Y$ a continuous mapping satisfying the following two conditions:
\begin{enumerate}
    \item $f$ is homotopic to a regular mapping $g:C\rightarrow Y$,
    \item $f\vert_{Z\cap C}$ is regular. 
\end{enumerate}
Then $f$ can be approximated by regular mappings $\widetilde{f}:C\rightarrow Y$ in the compact-open topology, agreeing with $f$ on $Z\cap C$.

Moreover, this property characterises uniformly retract rational varieties among all affine varieties.
\end{thm}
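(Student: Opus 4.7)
\emph{URR implies the approximation property.} By noetherianity of the Zariski topology on $Y$, fix a finite cover $Y=V_1\cup\cdots\cup V_m$ by globally retract rational Zariski opens, with regular maps $i_j\colon V_j\to W_j\subset\R^{n_j}$ and $r_j\colon W_j\to V_j$ satisfying $r_j\circ i_j=\id_{V_j}$. Fix a compact $K\subset C$ containing $Z\cap C$ and a tolerance $\varepsilon>0$; it is enough to produce a regular $\widetilde{f}$ which is $\varepsilon$-close to $f$ on $K$ and agrees with $f$ on $Z\cap C$. The single-chart case---when $f(K)\subset V_j$ for some $j$---is the core of the argument: $i_j\circ f$ is continuous with prescribed regular restriction to $Z\cap C$, so componentwise approximation by regular functions $C\to\R$ with interpolation on $Z\cap C$ (via a Stone--Weierstrass-type result for regular functions on closed semialgebraic sets) supplies a regular map into $W_j$, and composing with $r_j$ yields $\widetilde{f}$, the tightness of the approximation ensuring the image lies inside $W_j$.

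\emph{Reduction to the local case.} The general case is handled by induction on the minimal number of charts $V_j$ needed to cover $f(K)$, using the homotopy $f\simeq g$ with $g$ regular to continuously deform $f$ so as to progressively eliminate charts from this cover. At each step one applies the single-chart lemma on a piece of $f$ lying in a single chart, while preserving continuity, the interpolation condition on $Z\cap C$, and homotopy to a regular map. I expect this gluing step---coordinating the local approximations across chart overlaps while maintaining all the structure---to be the main technical obstacle of this direction.

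\emph{The property implies URR.} Fix $y_0\in Y$ and realise $Y$ as a Zariski closed subset of some $\R^N$. Take a small closed ball $C\subset\R^N$ centred at $y_0$, so that $Z\cap C=Y\cap C$ is compact, and construct a continuous semialgebraic retraction $\rho\colon C\to Y\cap C$ (available after shrinking $C$ by standard semialgebraic retraction results). The map $\rho$ is homotopic in $Y$ to the regular constant map $y_0$, and $\rho|_{Y\cap C}=\id$ is regular, so the approximation property applied to $f=\rho$ with $X=\R^N$ and $Z=Y$ yields a regular $\widetilde{f}\colon C\to Y$ with $\widetilde{f}|_{Y\cap C}=\id$. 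By the extended definition of regularity on a subset, $\widetilde{f}$ extends to a regular mapping on some Zariski open $W\subset\R^N$ containing $C$; replacing $W$ by $W\cap\widetilde{f}^{-1}(W\cap Y)$ and restricting to the locus where $\widetilde{f}$ acts as the identity---which, by Zariski density of $Y\cap C$ in the irreducible component of $W\cap Y$ through $y_0$ and uniqueness of regular maps, is a Zariski open neighbourhood of $y_0$---produces the retraction data certifying that $y_0$ admits a globally retract rational Zariski open neighbourhood in $Y$. The chief subtlety here is the Zariski density step at possibly singular points, so the argument must in passing establish smoothness at $y_0$, consistent with the fact that URR varieties are nonsingular.
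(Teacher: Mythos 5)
The direction ``the approximation property implies uniform retract rationality'' in your proposal is essentially the paper's own argument (semialgebraic retraction onto $Y$, restrict to a small closed ball, apply the property with $Z=Y$, extract the retraction data from the Zariski-open extension), and your worry about identifying the locus where $\widetilde f=\id$ is a reasonable refinement of it. The gap is in the converse direction. Your ``reduction to the local case'' is not an argument: you propose to induct on the number of charts covering $f(K)$ and to use the homotopy $f\simeq g$ to ``progressively eliminate charts,'' but you give no mechanism for doing this, and the difficulty you defer --- coordinating the local approximations across overlaps while preserving regularity and the interpolation on $Z\cap C$ --- is precisely the content of the theorem. Modifying $f$ on the part of $C$ lying over one chart destroys what was arranged over the adjacent charts, and there is no reason a deformation of $f$ should fit into fewer charts. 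The paper resolves this in a genuinely different way: the gluing is done once and for all on $Y$ itself, not on $f$. Each globally retract rational chart is converted into a \emph{strong dominating spray} $(n,M,N,\sigma,\tau)$, and the technical heart of the paper (Section 3) is a gluing theorem showing that a variety covered by strongly malleable Zariski opens is strongly malleable, proved via a denominator-clearing lemma and a Chinese-remainder splitting of the two denominators. With a single global spray in hand the approximation runs globally: first approximate $f$ by a regular $h$ in the compact-open topology \emph{without} interpolation (this is where the hypothesis that $f$ is homotopic to a regular map enters, through the known approximation theorem for malleable varieties), then set $\varphi:=\tau(h,f)$ on $Z\cap C$, extend it to a regular $\psi$ on all of $C$ with a global sup-norm bound, and take $\widetilde f:=\sigma(h,\psi)$.

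Even your single-chart case has a gap tied to the non-compactness of $C$. You only control the approximant on the compact $K$, but $\widetilde f$ must be regular and $Y$-valued on all of $C$: composing with $r_j$ requires the $\R^{n_j}$-valued approximant to land in $W_j$ at \emph{every} point of $C$, not just on $K$. This is exactly why the paper proves an extension lemma with the global bound $\sup_{x\in C}\|F(x)\|\le 2\sup_{x\in Z\cap C}\|f(x)\|$ (using \L{}ojasiewicz's inequality, where closedness and semialgebraicity of $C$ and compactness of $Z\cap C$ are used) and renormalises the spray so that $Y\times\B(0,1)\subset M$. Without some such global control, $r_j$ composed with your approximant is undefined on part of $C$.
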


\begin{rem}
The assumption of $C$ being closed and semialgebraic in Theorem \ref{cont_app_thm} is essential. Nonetheless, it is not clear whether the assumption on $Z\cap C$ being compact can be dropped; see Section \ref{sec:examples} for the relevant counterexamples.
\end{rem}

There is also a version of Theorem \ref{cont_app_thm} taking into account approximation in the $\mathcal{C}^\infty$ topology. To state it, we first need to take note of the following observation, the proof of which is postponed until the next section:
\begin{obs}\label{urr=>nonsingular}
Uniformly retract rational varieties are non-singular.
\end{obs}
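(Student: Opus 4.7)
Non-singularity is a Zariski-local property, so it suffices to prove the statement under the assumption that $Y$ is globally retract rational. Fix regular maps $i: Y \to W$ and $r: W \to Y$ with $W$ Zariski open in some $\R^n$ and $r \circ i = \id_Y$.

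The key observation is that the regular self-map $e := i \circ r: W \to W$ is an idempotent (i.e.\ $e \circ e = e$), whose fixed-point set coincides with $i(Y)$. Since $i: Y \to i(Y)$ is a regular bijection with regular inverse $r\vert_{i(Y)}$, the varieties $Y$ and $i(Y)$ are isomorphic, and the problem reduces to showing that $i(Y)$ is non-singular at every point. Passing to complexifications, it is enough to check that $i_{\mathbb{C}}(Y_{\mathbb{C}})$ is a complex submanifold of $W_{\mathbb{C}}$ in a neighbourhood of each of its points, since analytic and algebraic non-singularity are equivalent for points of a complex algebraic variety.

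The heart of the argument is a local linearization of $e_{\mathbb{C}}$ at a fixed point $w_0$. The differential $d_{w_0} e_{\mathbb{C}}$ is an idempotent linear endomorphism of $T_{w_0} W_{\mathbb{C}} = \mathbb{C}^n$, hence a projection onto a subspace of some dimension $k$. After a linear change of coordinates centred at $w_0$ one may assume $d_{w_0} e_{\mathbb{C}}$ is the standard projection $(v', v'') \mapsto (v', 0)$ with $v' \in \mathbb{C}^k$ and $v'' \in \mathbb{C}^{n-k}$; writing $e_{\mathbb{C}} = (e', e'')$ accordingly, I would introduce the local holomorphic map
\begin{equation*}
\Phi(x', x'') := \bigl(e'(x', x''),\, x'' - e''(x', x'')\bigr).
\end{equation*}
A direct calculation yields $d_{w_0} \Phi = \id$, so $\Phi$ is a local biholomorphism, and using $e_{\mathbb{C}} \circ e_{\mathbb{C}} = e_{\mathbb{C}}$ one verifies that in the new coordinates $e_{\mathbb{C}}$ becomes the linear projection $(y', y'') \mapsto (y', 0)$. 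Its fixed locus is then the linear subspace $\{y'' = 0\}$, which exhibits $i_{\mathbb{C}}(Y_{\mathbb{C}})$ as a complex submanifold of $W_{\mathbb{C}}$ of dimension $k$ near $w_0$.

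The main technical point is the clean execution of this linearization step, namely verifying that $\Phi$ is a local biholomorphism and that it indeed conjugates $e_{\mathbb{C}}$ to a linear projection; the rest of the argument is bookkeeping and the invocation of the standard equivalence between analytic and algebraic smoothness for complex varieties.
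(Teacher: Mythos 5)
Your proposal takes a genuinely different route from the paper's. The paper never complexifies: after reducing to the globally retract rational case and checking irreducibility, it bounds the rank of $dr$ by $m=\dim Y$ at every point of $W$ (because the image of $r$ has semialgebraic dimension $m$), and then observes that $d_ar$ restricts to the identity on the Zariski tangent space $T_a^{\textnormal{Zar}}(Y)$, forcing $\dim T_a^{\textnormal{Zar}}(Y)\leq m$ and hence regularity of $a$. Your linearization of the idempotent $e=i\circ r$ is correct as a computation: $d_{w_0}\Phi=\id$ and $\Phi\circ e_{\mathbb{C}}=P\circ\Phi$ with $P(y',y'')=(y',0)$ do follow from $e\circ e=e$, so $\textnormal{Fix}(e_{\mathbb{C}})$ is indeed a $k$-dimensional complex submanifold near $w_0$. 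This avoids the paper's appeal to a rank theorem for Nash mappings and gives a very concrete local model.

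There is, however, a genuine gap in what you call bookkeeping. In real algebraic geometry, knowing that the set of real points is an analytic (even Nash) submanifold near a point does \emph{not} imply that the point is non-singular for the variety: the curve $y^3+2x^2y-x^4=0$ is the graph of a Nash function near the origin and yet the origin is a singular point. So showing that the fixed locus is a manifold is not by itself enough; you must identify the \emph{algebraic} local structure of $Y$ with that manifold. Two points need to be supplied. First, $i$ is a closed immersion onto $\textnormal{Fix}(e)$ (this is easy: $i^*\circ r^*=\id$ makes $i^*$ surjective, so $\RR(Y)\cong\RR(W)/\ker i^*$), which reduces the claim to regularity of the local ring of the subvariety $\textnormal{Fix}(e)\subset W$ at $w_0$. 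Second, and this is the crux, you must show that near the real point $w_0$ the complexification of the real algebraic set $\textnormal{Fix}(e)$ is \emph{equal} to the complex manifold $\textnormal{Fix}(e_{\mathbb{C}})$, not merely contained in a smooth germ. The inclusion one way is clear; the reverse holds because your chart $\Phi$ is defined over $\R$, so $\textnormal{Fix}(e)$ is a totally real $k$-dimensional submanifold of the $k$-dimensional complex manifold $\textnormal{Fix}(e_{\mathbb{C}})$, and every polynomial vanishing on the former must vanish on the latter. Combined with the standard facts that a real point of a real variety is non-singular if and only if it is non-singular on the complexification, and that for complex varieties analytic and algebraic smoothness agree, this closes the argument. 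As written, the proposal stops one step short of the only part of the proof that is specific to the real algebraic category.
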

The second result of the paper can now be formulated as the following:

\begin{thm}\label{smooth_app_thm}
Let $Y$ be a uniformly retract rational affine variety. Then the following approximation property holds true:

Let $X$ be a non-singular affine variety, $Z\subset X$ be a Zariski closed non-singular subvariety of $X$, $C\subset X$ be a closed and semialgebraic set such that $Z\cap C$ is compact, $D\subset C$ a set open in $X$, and $f:C\rightarrow Y$ a continuous mapping satisfying the following three conditions:
\begin{enumerate}
    \item $f$ is homotopic to a regular mapping $g:C\rightarrow Y$,
    \item $f\vert_{Z\cap C}$ is regular,
    \item $f\vert_D$ is smooth.
\end{enumerate}
Then $f$ can be approximated by regular mappings $\widetilde{f}:C\rightarrow Y$ in the compact-open topology, which simultaneously approximate $f\vert_D$ in the $\CC^\infty$ topology and agree with $f$ on $Z\cap C$.
\end{thm}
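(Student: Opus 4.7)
The plan is to bootstrap from Theorem \ref{cont_app_thm} and then upgrade the continuous regular approximation thus obtained to one controlled in the $\CC^\infty$ topology on $D$. The key mechanism is that on each globally retract rational chart $V \subset Y$ with $i \colon V \to W \subset \R^n$ and $r \colon W \to V$, one can transport an approximation problem into the affine open set $W$, use smooth-by-regular approximation theorems in a Euclidean ambient space, and then push back via $r$. Since compositions with regular maps preserve regularity, and compositions with a retraction $r$ whose differential is bounded on a compact set preserve $\CC^\infty$-closeness, this mechanism is compatible with both topologies.

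First I would apply Theorem \ref{cont_app_thm} to produce a regular $h \colon C \to Y$ close to $f$ in the compact-open topology on $C$ with $h|_{Z \cap C} = f|_{Z \cap C}$. This handles all constraints except $\CC^\infty$-closeness on $D$. Fix a compact $K' \subset D$ together with a tolerance $\varepsilon > 0$. Cover the compact set $f(K' \cup (Z \cap C))$ by finitely many globally retract rational open sets $V_1, \ldots, V_N$ equipped with regular $i_j \colon V_j \to W_j \subset \R^{n_j}$ and $r_j \colon W_j \to V_j$. By continuity and by taking $h$ sufficiently $\CC^0$-close to $f$, decompose $K'$ into closed semialgebraic pieces $K^{(j)}$ on which both $f$ and $h$ map into $V_j$.

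On each $K^{(j)}$ the composite $i_j \circ f$ is smooth (since $K^{(j)} \subset D$ and $D$ is open in the non-singular $X$), while $i_j \circ h$ is regular; they agree up to small error on $K^{(j)}$, and after modification they coincide on $Z \cap K^{(j)}$. Here I would invoke a relative smooth Stone-Weierstrass theorem for maps into Zariski open subsets of $\R^{n_j}$ — a result either already in the paper's toolkit or directly deducible from classical smooth approximation combined with Theorem \ref{cont_app_thm} applied with $Y = W_j$. This yields regular $\psi_j \colon K^{(j)} \to W_j$ that approximate $i_j \circ f$ in the $\CC^\infty$ topology on $K^{(j)}$, approximate it in the compact-open topology, and agree with $i_j \circ h$ on $Z \cap K^{(j)}$. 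The composite $r_j \circ \psi_j \colon K^{(j)} \to V_j \subset Y$ is then a regular local approximation enjoying all desired properties.

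The main obstacle is gluing these local approximations, together with $h$, into a single regular map $\widetilde{f} \colon C \to Y$ preserving simultaneously: regularity on $C$, equality with $f$ on $Z \cap C$, compact-open closeness on $C$, and $\CC^\infty$-closeness on $D$. My intended attack is an inductive replacement argument, modifying $h$ on one chart $V_j$ at a time. At stage $j$, having both $i_j \circ h$ and $\psi_j$ defined on a neighbourhood of $K^{(j)}$, I would interpolate between them via a regular function vanishing outside $K^{(j)}$ and on $Z$, stay inside $W_j$ by making $\varepsilon$ small compared to the distance from $i_j \circ h(K^{(j)})$ to $\R^{n_j} \setminus W_j$, and then compose with $r_j$ to return to $Y$. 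The retraction guarantees that any small regular perturbation in $W_j$ corresponds to a regular map into $V_j \subset Y$; making this interpolation robust enough to preserve all four constraints simultaneously, and to piece together coherently across overlapping pieces, is the delicate technical step.
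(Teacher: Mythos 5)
Your overall strategy --- first produce a $\mathcal{C}^0$-approximation $h$ agreeing with $f$ on $Z\cap C$ via Theorem \ref{cont_app_thm}, then correct it to gain $\CC^\infty$-closeness on a relatively compact part of $D$ --- is the same as the paper's. The genuine gap is in your gluing step, which you yourself flag as ``the delicate technical step'' but do not resolve, and whose proposed mechanism does not work as stated. A regular function on an irreducible variety that vanishes outside the semialgebraic piece $K^{(j)}$ (a set with nonempty interior) must vanish identically, so the interpolating cutoff cannot be regular and exactly supported in $K^{(j)}$; more seriously, outside $K^{(j)}$ the map $h$ need not take values in the chart $V_j$ at all, so neither $i_j\circ h$ nor the retraction $r_j$ is available precisely in the transition region where your interpolation must merge back into $h$. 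There is no obvious way to make a chart-by-chart replacement of $h$ coherent across overlaps while preserving regularity, the interpolation along $Z$, and both topologies simultaneously; this is not a routine technicality but the central difficulty of the problem.

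The paper avoids this by performing the gluing once and for all at the level of the local retract-rational charts rather than at the level of the approximants: Section \ref{sec:gluing} (Theorem \ref{lsmalleable=>smalleable}, using Lemma \ref{gluing_lem1} and a Chinese-remainder argument) upgrades the local data to a single global strong dominating spray $(n,M,N,\sigma,\tau)$ over $Y$, with $\sigma$ defined on a Zariski neighbourhood of $Y\times\{0\}$ in $Y\times\R^n$ and $\tau$ on a Zariski neighbourhood of the diagonal. With this in hand the proof of Theorem \ref{smooth_app_thm} needs no charts and no gluing of approximants: one sets $\varphi(x):=\tau(h(x),f(x))$ on a relatively compact open $D_0\subset D$ (this is smooth, small in supremum norm, and vanishes on $Z\cap D_0$ because $h=f$ there and $\tau(y,y)=0$), approximates $\varphi$ by a regular $\psi:X\rightarrow\R^n$ vanishing on all of $Z$ and globally bounded by $2\varepsilon$ (Lemma \ref{C^inf_extension_lem}), and takes $\widetilde{f}:=\sigma(h,\psi)$. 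The $\R^n$-valued correction $\psi$ can be extended, bounded and $\CC^\infty$-approximated by linear-space techniques, which is exactly what your chart-valued corrections cannot do. To repair your argument you would essentially have to reprove the spray-gluing theorem, so the missing ingredient is Theorem \ref{lsmalleable=>smalleable} (or an equivalent globalisation of the local retract-rational structure).
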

\begin{rem}\label{faith}
Theorem \ref{smooth_app_thm} can be strengthened further, at the cost of it becoming more technical. Recall from \cite[Section 4]{Banecki+Kucharz}, that a Zariski closed subvariety $Z$ of a non-singular variety $X$ is said to be \emph{faithful} at a point $x\in Z$ if 
\begin{equation*}
    \I_{\mathcal{O}_x}(Z_x)=\I_{\RR(X)}(Z)\mathcal{O}_x
\end{equation*}
where $\mathcal{O}_x$ is the ring of all real analytic function-germs $(X,x)\rightarrow \R$ and $\I_{\mathcal{O}_x}(Z_x)$ is the ideal in $\mathcal{O}_x$ comprised of all function-germs vanishing on the germ $Z_x$ of $Z$ at $x$, while $\I_{\RR(X)}(Z)\mathcal{O}_x$ is the extension of the ideal $\I_{\RR(X)}(Z)$ in $\RR(X)$ via the canonical ring homomorphism $\RR(X)\rightarrow \mathcal{O}_x$. With this definition in mind, Theorem \ref{smooth_app_thm} holds without the assumption on $Z$ being non-singular, if instead it is assumed faithful at every point of $Z\cap D$.
\end{rem}

To illustrate the strength of our results we present a few simple applications. Most notably, they imply non-trivial facts regarding extensions of regular mappings from subvarieties:
\begin{cor}
Let $Y$ be a uniformly retract rational affine variety and let $X$ be an affine variety. Let $Z\subset X$ be a compact Zariski closed subvariety of $X$ and $f,g:X\rightarrow Y$ be two homotopic regular mappings. Suppose that $g$ extends to a regular mapping $G:X\rightarrow Y$. Then $f$ extends to a regular mapping $F:X\rightarrow Y$ as well.
\begin{proof}
Let $H:Z\times[0,1]\rightarrow Y$ be a homotopy between $g$ and $f$. By Borsuk homotopy extension theorem (\cite[p. 94]{Borsuk}) we get that $H$ can be extended to a homotopy $\widehat{H}:X\times [0,1]\rightarrow Y$ such that $\widehat{H}(x,0)=G(x)$ for $x\in X$ and $\widehat{H}(x,1)=f(x)$ for $x\in Z$. We can now apply Theorem \ref{cont_app_thm} to obtain that $x\mapsto \widehat{H}(x,1)$ can be approximated by regular mappings agreeing with $f$ on $Z$. Any such approximation is the desired extension of $f$.
\end{proof}
\end{cor}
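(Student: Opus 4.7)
The plan is to reduce the extension problem to the approximation--interpolation statement of Theorem \ref{cont_app_thm}. The strategy is: first manufacture \emph{some} continuous extension $\widehat{f}:X\rightarrow Y$ of $f$ which we know to be homotopic to a regular mapping; then apply Theorem \ref{cont_app_thm} to promote $\widehat{f}$ to a regular extension of $f$.

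For the first step I would invoke the Borsuk homotopy extension theorem. Let $H:Z\times[0,1]\rightarrow Y$ be a homotopy with $H(\cdot,0)=g$ and $H(\cdot,1)=f$. Since $Z$ is a closed subset of the metrizable space $X$, the pair $(X,Z)$ has the homotopy extension property, so $H$ extends to a homotopy $\widehat{H}:X\times[0,1]\rightarrow Y$ starting at the known regular extension $G$, i.e.\ $\widehat{H}(\cdot,0)=G$. Define $\widehat{f}(x):=\widehat{H}(x,1)$. Then $\widehat{f}:X\rightarrow Y$ is continuous, agrees with $f$ on $Z$, and is homotopic to the regular mapping $G$ via $\widehat{H}$.

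For the second step I would apply Theorem \ref{cont_app_thm} to $\widehat{f}$ with the semialgebraic closed set $C=X$ and the Zariski closed subvariety $Z\subset X$. Hypothesis (1) is satisfied because $\widehat{f}$ is homotopic to $G$, and hypothesis (2) holds because $\widehat{f}|_{Z\cap C}=\widehat{f}|_Z=f$ is regular. The compactness of $Z\cap C=Z$ required by the theorem is guaranteed by our hypothesis on $Z$. The theorem therefore produces a regular approximation $F:X\rightarrow Y$ of $\widehat{f}$ which still agrees with $\widehat{f}$ on $Z$, and hence with $f$ on $Z$. Such an $F$ is the sought-after regular extension.

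There is no real obstacle beyond aligning the hypotheses: the compactness assumption on $Z$ is precisely what feeds the compactness condition in Theorem \ref{cont_app_thm}, and the Borsuk extension plays the sole role of producing a continuous globalization of $f$ that inherits the homotopy class of the already-extended map $G$. Note that the hypothesis that $f$ and $g$ are homotopic cannot be dropped, since the conclusion would then fail for purely homotopy-theoretic reasons.
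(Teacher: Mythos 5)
Your proof is correct and follows essentially the same route as the paper's: extend the homotopy via the Borsuk homotopy extension theorem starting from the known regular extension $G$, then apply Theorem \ref{cont_app_thm} with $C=X$ to the time-one map $\widehat{H}(\cdot,1)$. The only difference is that you spell out the verification of the theorem's hypotheses, which the paper leaves implicit.
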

Another interesting corollary of our results is the following:
\begin{cor}
Let $Y$ be a uniformly retract rational affine variety and let $X$ be a compact affine variety. Let $f,g:X\rightarrow Y$ be two homotopic mappings. Then, there exists a regular mapping $F:X\times\R\rightarrow Y$ such that $F(x,0)=f(x),\;F(x,1)=g(x)$ for $x\in X$. In other words the mappings are homotopic through a homotopy which itself is a regular mapping.
\begin{proof}
Let $H:X\times [0,1]\rightarrow Y$ be a homotopy between $f$ and $g$. We extend $H$ to a mapping $\widehat{H}:X\times\R\rightarrow Y$ defined by the formula:
\begin{equation*}
    \widehat{H}(x,t):=
    \begin{cases}
        H(x,0) & t<0 \\
        H(x,t) & 0\leq t \leq 1 \\
        H(x,1) & t>1
    \end{cases}
\end{equation*}
The mapping $\widehat{H}$ is homotopic to the regular mapping $(x,t)\mapsto f(x)$ through the homotopy
\begin{equation*}
    (x,t,\theta)\mapsto \widehat{H}(x,t\theta).
\end{equation*}
Finally, it suffices to apply Theorem \ref{cont_app_thm} to $\widehat{H}$ with $C:=X\times \R$ and $Z:=X\times\{0\}\cup X\times \{1\}$.
\end{proof}
\end{cor}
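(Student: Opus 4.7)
The natural plan is to reduce the corollary directly to Theorem~\ref{cont_app_thm} by extending the given (merely continuous) homotopy to the full cylinder $X\times\R$ and then invoking the approximation-interpolation property to snap the two ends of the cylinder exactly to $f$ and $g$.

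Concretely, let $H:X\times[0,1]\to Y$ be a continuous homotopy from $f$ to $g$. I would extend it in the $t$-direction by constants, setting $\widehat{H}(x,t):=f(x)$ for $t\le 0$, $\widehat{H}(x,t):=H(x,t)$ for $t\in[0,1]$, and $\widehat{H}(x,t):=g(x)$ for $t\ge 1$; continuity at $t=0,1$ follows from $H(x,0)=f(x)$ and $H(x,1)=g(x)$. Next I would apply Theorem~\ref{cont_app_thm} with ambient variety $X\times\R$, choosing $C:=X\times\R$ (closed and semialgebraic) and $Z:=X\times\{0\}\cup X\times\{1\}$ (Zariski closed). Compactness of $Z\cap C=Z$ comes for free from compactness of $X$. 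For hypothesis~(2) the restriction $\widehat{H}|_Z$ is just the disjoint pair $(f,g)$, which is regular (see the remark below). For hypothesis~(1) I would use the rescaling homotopy $\Phi_\theta(x,t):=\widehat{H}(x,\theta t)$, $\theta\in[0,1]$, which connects $\widehat{H}$ to the regular map $(x,t)\mapsto \widehat{H}(x,0)=f(x)$. Theorem~\ref{cont_app_thm} then produces a regular $F:X\times\R\to Y$ agreeing with $\widehat{H}$ on $Z$, and this $F$ satisfies $F(x,0)=f(x)$ and $F(x,1)=g(x)$ as required.

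I do not expect a real obstacle; the entire argument is an essentially forced unpacking of Theorem~\ref{cont_app_thm}. The only mildly delicate point is the reading of the hypothesis ``$f,g$ are homotopic mappings'': since $F$ is regular, the identities $F(x,0)=f(x)$ and $F(x,1)=g(x)$ force $f$ and $g$ to be regular to begin with, so the statement must be interpreted with $f,g$ regular. Under that interpretation the verification of both hypotheses of Theorem~\ref{cont_app_thm} is immediate, and the corollary follows.
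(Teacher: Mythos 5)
Your proposal is correct and follows essentially the same route as the paper: the same constant extension $\widehat{H}$ to $X\times\R$, the same rescaling homotopy $(x,t,\theta)\mapsto\widehat{H}(x,t\theta)$ to the regular map $(x,t)\mapsto f(x)$, and the same application of Theorem~\ref{cont_app_thm} with $C=X\times\R$ and $Z=X\times\{0\}\cup X\times\{1\}$. Your side remark that $f$ and $g$ must be regular for hypothesis~(2) to hold (and for the conclusion to be consistent) is a fair reading of the statement, which the paper leaves implicit.
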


To end this section we would like to summarise the known implications between the classes of uniformly retract rational, malleable and quasi-malleable varieties. To make the picture complete we would like to recall four other classes of varieties related to the topic:
\begin{defi}
Let $Y$ be an affine variety.

We say that $Y$ is \emph{rational} if there exists a Zariski open and dense subset $V$ of $Y$ which is biregularly isomorphic to a Zariski open subset $W$ of $\R^n$ for some $n$.

We say that $Y$ is \emph{uniformly rational} if every point $y\in Y$ admits a Zariski open neighbourhood $V$ in $Y$ which is biregularly isomorphic to a Zariski open subset $W$ of $\R^n$ for some $n$.

We say that $Y$ is \emph{dominable at every point} if for every $y\in Y$ there exists a natural number $n$ and a regular mapping $\varphi:\R^n\rightarrow Y$ such that $\varphi(0)=y$ and the derivative $d_0\varphi:T_0\R^n\rightarrow T_yY$ is surjective.

Finally, we say that $Y$ is \emph{unirational} if there exists a Zariski open subset $W$ of $\R^n$ for some $n$ and a regular mapping $\varphi:W\rightarrow Y$ such that the image of $\varphi$ is Zariski dense in $Y$.
\end{defi}

Let $Y$ be a non-singular irreducible affine variety. The following diagram illustrates the known implications between the different properties regarding $Y$:
\begin{center}
\begin{tikzcd}
\text{uniformly rational} \arrow[r, "(5)", Rightarrow] \arrow[d, "(1)", Rightarrow]         & \text{rational} \arrow[d, "(8)", Rightarrow]           \\
\text{uniformly retract rational} \arrow[d, "(2)", Rightarrow] \arrow[r, "(6)", Rightarrow] & \text{retract rational} \arrow[ddd, "(9)", Rightarrow] \\
\text{malleable} \arrow[d, "(3)", Rightarrow]                                               &                                                        \\
\text{quasi-malleable} \arrow[d, "(4)", Rightarrow]                                         &                                                        \\
\text{dominable at every point} \arrow[r, "(7)", Rightarrow]                                       & \text{unirational}                                    
\end{tikzcd}
\end{center}

The only non-trivial implication in the diagram is $(2)$, which follows from Theorems \ref{urr_equivalences} and \ref{smalleable=>malleable}.

In Section \ref{sec:examples} we provide an explicit example of a malleable variety, which is not retract rational. This shows that the arrows $(2)$ and $(9)$ cannot be reversed.

There also does exist an example of a uniformly retract rational variety which is not rational, forcing the arrows $(1)$ and $(8)$ to be strict. It is based on a rather sophisticated construction given in \cite{counterex}. The complex algebraic hypersurface $V$ in $\mathbb{C}^4$ described in \cite[p. 315, Example 2]{counterex} is not rational. Since $V$ is defined over $\R$, the real algebraic hypersurface $V(\R) := V \cap \R^4$ in $\R^4$ is not rational. On the other hand, by \cite[p. 299, Th\'eor\`eme 1’]{counterex} there exists a nonempty Zariski open subset of $V (\R) \times \R^3$ that is biregularly isomorphic to a Zariski open subset of $\R^6$. Thus, some nonempty Zariski open subset $Y$ of $V (\R)$ is a globally retract rational real affine variety. Clearly, $Y$ is not rational. This result was brought to the author's attention by Wojciech Kucharz, who originally was informed about it by Olivier Benoist.

To the author's best knowledge nothing is known about the remaining implications, and in each case it is not at all clear what to expect. The question whether $(5)$ is strict had been considered before by Gromov in \cite{Gromov} and it has been open ever since.


\section{Preliminaries}
We begin by presenting the proof of Observation \ref{urr=>nonsingular}:

\begin{lem}\label{grr_embedding_lem}
Let $Y$ be a globally retract rational variety. Fix an embedding of $Y$ as a closed subvariety of $\R^m$ for some $m$. Then there exists a Zariski open set $W'\subset \R^m$ containing $Y$ together with a regular retraction $r':W'\rightarrow Y$, $r'\vert_Y =\id$.
\begin{proof}
Let $W\subset \R^n$ and $i:Y\rightarrow W$, $r:W\rightarrow Y$ be as in the definition of global retract rationality. The mapping $i$ admits a regular extension $\widetilde{i}:W'\rightarrow W$ to a Zariski open set $W'\subset \R^m$ containing $Y$. It suffices to define $r':=r\circ \widetilde{i}$.
\end{proof}
\end{lem}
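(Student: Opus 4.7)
The plan is to follow essentially the hint embedded in the statement: take the globally retract rational structure of $Y$ in some ambient $\R^n$, extend the regular map $i\colon Y\to W\subset\R^n$ to a regular map defined on a Zariski open neighbourhood of $Y$ in the given ambient $\R^m$, and then compose with the preexisting retraction $r$.

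First I would unfold the definition of global retract rationality to obtain a Zariski open $W\subset\R^n$ together with regular mappings $i\colon Y\to W$ and $r\colon W\to Y$ such that $r\circ i=\id_Y$. Composing with the inclusion $W\hookrightarrow\R^n$, I view $i$ as a regular map $Y\to\R^n$, and decompose it componentwise as $i=(i_1,\dots,i_n)$ with $i_k\in\RR(Y)$. Since $Y$ is Zariski closed in $\R^m$, each regular function $i_k$ can be represented as $i_k=p_k/q_k|_Y$ where $p_k,q_k$ are polynomials on $\R^m$ and $q_k$ is nowhere vanishing on $Y$. Setting $U:=\{x\in\R^m:q_1(x)\cdots q_n(x)\neq 0\}$ yields a Zariski open neighbourhood of $Y$ in $\R^m$, and the formula
\begin{equation*}
\widetilde{i}(x):=\bigl(p_1(x)/q_1(x),\dots,p_n(x)/q_n(x)\bigr)
\end{equation*}
defines a regular extension $\widetilde{i}\colon U\to\R^n$ of $i$.

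Next I would shrink the domain to $W':=U\cap\widetilde{i}^{-1}(W)$, which is still Zariski open in $\R^m$ (as $W$ is Zariski open in $\R^n$ and $\widetilde{i}$ is regular, hence Zariski continuous) and still contains $Y$ since $\widetilde{i}(Y)=i(Y)\subset W$. The restriction $\widetilde{i}|_{W'}\colon W'\to W$ is regular, so the composition $r':=r\circ\widetilde{i}|_{W'}\colon W'\to Y$ is a regular map. For every $y\in Y$ one computes $r'(y)=r(\widetilde{i}(y))=r(i(y))=y$, which gives $r'|_Y=\id_Y$, as required.

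I do not foresee a genuine obstacle: the argument is essentially formal once the componentwise extension in the second paragraph is in place, and that step rests only on the standard description of $\RR(Y)$ for $Y$ a closed real algebraic subset of $\R^m$ as restrictions of ratios $p/q$ of polynomials with $q$ nowhere zero on $Y$, which is part of the foundational setup in \cite{RAG}.
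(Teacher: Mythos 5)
Your proposal is correct and follows exactly the paper's argument: extend $i$ to a regular map $\widetilde{i}$ on a Zariski open neighbourhood of $Y$ in $\R^m$, shrink to the preimage of $W$, and set $r':=r\circ\widetilde{i}$. The paper simply leaves the componentwise extension of $i$ implicit, which you spell out.
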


\begin{proof}[Proof of Observation \ref{urr=>nonsingular}]
After passing to a neighbourhood it suffices to show that a globally retract rational variety $Y$ is non-singular. Applying Lemma \ref{grr_embedding_lem}, we can embed $Y$ in a Zariski open set $W\subset\R^n$ such that there exists a regular retraction $r:W\rightarrow Y$. 

First of all we claim that $Y$ is irreducible. Suppose that $Y$ splits as a union $Y=Y_1\cup Y_2$ where $Y_1,Y_2$ are Zariski closed subset of $Y$. Then $W$ splits as $W=r^{-1}(Y_1)\cup r^{-1}(Y_2)$, and since it is irreducible we get $W=r^{-1}(Y_1)$ or $W=r^{-1}(Y_2)$. It follows that $Y=Y_1$ or $Y=Y_2$, so $Y$ is indeed irreducible.

\begin{claim*}
Let now $m$ be the dimension of $Y$. Considering $r$ as a mapping from $W$ to $\R^n$, the rank of its derivative $dr$ is at most $m$ at every point of $W$.
\begin{proof}
Suppose the contrary, and let $m_0>m$ be the maximal rank of $dr$. From a version of the rank theorem for Nash mappings it then follows that $Y=\im \;r$ contains a subset which is Nash diffeomorphic to the $m_0$-dimensional hypercube. This contradicts the fact that the semialgebraic dimension of $Y$ is equal to its dimension as an affine variety, i.e. $m$.
\end{proof}
\end{claim*}
Let now $a$ be an arbitrary point of $Y$. Consider the Zariski tangent space $T_a^\text{Zar}(Y)$ to $Y$ at $a$. As $r$ agrees with the identity $\id:W\rightarrow W$ on $Y$, its derivative must agree with the derivative of the identity when restricted to the space $T_a^\text{Zar}(Y)$:
\begin{equation*}
    d_ar\vert_{T_a^\text{Zar}(Y)}=\id\vert_{T_a^\text{Zar}(Y)}.
\end{equation*}
As the rank of the left hand side is at most $m$, it follows that
\begin{equation*}
    \dim(T_a^\text{Zar}(Y))\leq m
\end{equation*}
which implies that $a$ is a regular point of $Y$.
\end{proof}

The following definition is crucial for the rest of the paper.
\begin{defi}
Let $Y$ be an affine variety. A strong dominating spray over $Y$ is a quintuple $(n, M, N, \sigma,\tau)$, where $n$ is a nonnegative integer, $M$ is a Zariski open subset of $Y\times\R^n$ containing $Y\times\{0\}$, $N\subset Y\times Y$ is a Zariski open subset of $Y\times Y$ containing the diagonal $\triangle_Y$ of $Y\times Y$ and $\sigma:M\rightarrow Y,\;\tau:N\rightarrow\R^n$ are regular mappings satisfying:
\begin{enumerate}
    \item $(y,\tau(y,z))\in M$ and $\sigma(y,\tau(y,z))=z$ for all $(y,z)\in N$,
    \item $\tau(y,y)=0$ for $y\in Y$.
\end{enumerate}
Note that the two conditions further imply that
\begin{equation*}
    \sigma(y,0)=y \text{ for every }x\in Y.
\end{equation*}
\end{defi}

\begin{defi}\label{def_of_smalleable}
We say that a non-singular real affine variety $Y$ is \emph{strongly malleable} if there exists a strong dominating spray over $Y$.

We say that $Y$ is \emph{locally strongly malleable} if every point $a\in Y$ admits a Zariski open neighbourhood $V$ in $Y$ which is strongly malleable. 
\end{defi}

The following examples illustrate two cases in which a strong dominating spray over a variety can be given explicitly. Later in the paper we will present a way less explicit, yet more general method of creating such sprays.
\begin{ex}
Let $G$ be a rational real algebraic group, i.e. a real affine variety, birationally equivalent to $\R^n$ for some $n$, endowed with a structure of a group such that the group operations $G\times G\rightarrow G$, $(a,b)\rightarrow a\cdot b$, and $G\rightarrow G$, $a\rightarrow a^{-1}$ are regular mappings. Then $G$ is strongly malleable.
\begin{proof}
As $G$ is rational, there exists a Zariski open set $V\subset G$ biregularly isomorphic to a Zariski open set $W\subset \R^n$ through an isomorphism $\varphi:V\rightarrow W$. After translations we may assume that $e\in V$, $0\in W$ and $\varphi(e)=0$, where $e$ is the neutral element of $G$. A strong dominating spray over $G$ can now be defined as follows:
\begin{align*}
    M&:=G\times W, \\
    N&:= \{(a,b)\in G\times Y: a^{-1}\cdot b \in V\},\\
    \tau(a,b)&:=\varphi(a^{-1}\cdot b), \\
    \sigma(a,v)&:=a\cdot \varphi^{-1}(v).
\end{align*}
\end{proof}
\end{ex}

\begin{ex}
Unit spheres of all dimensions are strongly malleable.
\begin{proof}
We can define a strong dominating spray over the $n$-dimensional unit sphere in the following way:
\begin{align*}
    &M:=\{(y,v)\in \s^n\times\R^{n+1}:v\neq -2y\},\\
    &N:=\{(y,z)\in \s^n\times\s^n: z\neq -y\},\\
    &\tau(y,z):=z-y, \\
    &\begin{multlined}
         \sigma(y,v):=-y+\frac{<2y,v+2y>}{||v+2y||^2}(v+2y)=\\
    =\text{ the intersection of the line passing through }-y \text{ and } y+v \\
    \text{ and the sphere (other than $-y$)}.
    \end{multlined}
\end{align*}
\end{proof}
\end{ex}

We will deduce Theorem \ref{cont_app_thm} directly from the following more general characterisation of uniformly retract rational varieties:

\begin{thm}\label{urr_equivalences}
Let $Y$ be an affine variety. The following conditions are equivalent:
\begin{enumerate}
    \item $Y$ satisfies the approximation property of Theorem \ref{cont_app_thm}, \label{(ap)}
    \item $Y$ is uniformly retract rational, \label{(urr)}
    \item $Y$ is locally strongly malleable, \label{(lsmalleable)}
    \item $Y$ is strongly malleable. \label{(smalleable)}
\end{enumerate}
\end{thm}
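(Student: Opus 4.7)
The plan is to establish the four-way equivalence via the cycle $(2) \Leftrightarrow (3)$, $(3) \Rightarrow (4)$, $(4) \Rightarrow (1)$, and $(1) \Rightarrow (2)$; of these, $(3) \Rightarrow (4)$ is the principal technical obstacle. The equivalence $(2) \Leftrightarrow (3)$ is direct: for $(2) \Rightarrow (3)$, given $y_0 \in Y$ in a globally retract rational neighbourhood $V$, I apply Lemma \ref{grr_embedding_lem} to embed $V$ as a closed subvariety of some $\R^m$ with a regular retraction $r : W \to V$ from a Zariski open $W \supset V$, and then the assignments $\sigma(y,v) := r(y+v)$ on $M := \{(y,v) \in V \times \R^m : y+v \in W\}$ and $\tau(y,z) := z - y$ on $N := V \times V$ form a strong dominating spray on $V$; the identities $\sigma(y, \tau(y,z)) = r(z) = z$ and $\tau(y,y) = 0$ are immediate. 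Conversely, for $(3) \Rightarrow (2)$, given a local spray $(n, M, N, \sigma, \tau)$ over a neighbourhood of $y_0$, the slice maps $i(z) := \tau(y_0, z)$ and $r(v) := \sigma(y_0, v)$ exhibit the Zariski open set $V' := \{z : (y_0, z) \in N\}$ as globally retract rational, after restricting to the Zariski open $W' := \{v \in \R^n : (y_0, v) \in M,\ \sigma(y_0, v) \in V'\}$ as target of $i$ and domain of $r$.

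For $(4) \Rightarrow (1)$, given the spray $(n, M, N, \sigma, \tau)$ and the data $(X, Z, C, f, g)$ of Theorem \ref{cont_app_thm}, the key formula is $\widetilde{f}(x) := \sigma(g(x), \phi(x))$, where $\phi : C \to \R^n$ is a regular approximation of $\tau(g, f)$ equal to $\tau(g, f)$ on $Z \cap C$. The spray identity then yields $\widetilde{f} \approx f$ in the compact-open topology and $\widetilde{f}|_{Z \cap C} = f|_{Z \cap C}$; the existence of such $\phi$ is a relative semialgebraic Stone-Weierstrass step exploiting compactness of $Z \cap C$ and closedness of $C$ in $X$. Since $(g, f)$ need not land in $N$ throughout $C$, the actual construction first uses the homotopy between $g$ and $f$ to build a finite chain $g = g_0, g_1, \ldots, g_k$ of regular maps with consecutive pairs landing in $N$, applying the spray correction at each step. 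For $(1) \Rightarrow (2)$, fix $y_0 \in Y$ and embed $Y$ as a closed subvariety of $\R^m$; the nearest-point projection (well defined on a small ball once $Y$ is locally a Euclidean neighbourhood retract, which holds semialgebraically) gives a continuous $f : \bar{B}(y_0, \varepsilon) \to Y$ with $f|_{Y \cap \bar{B}(y_0, \varepsilon)} = \id$, homotopic to the regular constant $y_0$ via the straight-line contraction. Applying $(1)$ with $X := \R^m$, $Z := Y$, $C := \bar{B}(y_0, \varepsilon)$ produces a regular $\widetilde{f} : C \to Y$ extending $\id$ on $Y \cap C$; by the definition of regularity on subsets, $\widetilde{f}$ extends to a regular map on a Zariski locally closed set containing $C$, and since $C$ has full-dimensional Euclidean interior in $\R^m$ this set is in fact Zariski open. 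Restricting its target yields the retraction realising a Zariski open neighbourhood of $y_0$ in $Y$ as globally retract rational; non-singularity of $y_0$, needed for the irreducibility argument identifying $\widetilde{f}_0 = \id$ on a Zariski open set, follows a posteriori from Observation \ref{urr=>nonsingular}.

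The main obstacle is $(3) \Rightarrow (4)$: gluing local strong dominating sprays on a finite Zariski open cover $V_1, \ldots, V_k$ of $Y$ into a single global one. The strategy is to extend each local spray $\sigma_i$ to a regular map $\widetilde{\sigma}_i : \widetilde{M}_i \to Y$ on a Zariski open $\widetilde{M}_i \subset Y \times \R^{n_i}$ by multiplying the spray parameter $v_i$ by a sufficiently high power of a regular function $h_i \in \RR(Y)$ vanishing on $Y \setminus V_i$; this absorbs the poles of $\sigma_i$ along the boundary of $V_i$ and makes $\widetilde{\sigma}_i$ reduce to the identity in the $y$-slot outside $V_i$. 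The composite
\begin{equation*}
\sigma(y, v_1, \ldots, v_k) := \widetilde{\sigma}_k \bigl( \widetilde{\sigma}_{k-1} \bigl( \cdots \widetilde{\sigma}_1(y, v_1) \cdots, v_{k-1} \bigr), v_k \bigr),
\end{equation*}
with parameter space $\R^{n_1 + \cdots + n_k}$, together with a $\tau$ obtained by chaining the local $\tau_i$'s along the cover, should then furnish a global strong dominating spray. Verifying regularity of each extension across the exceptional locus $\{h_i = 0\}$ and that the composite simultaneously inverts $\tau$ on a neighbourhood of the diagonal of $Y \times Y$ is where the bulk of the technical work will lie.
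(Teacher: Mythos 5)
Your overall architecture coincides with the paper's: the cycle is broken into exactly the same four implications, and your treatments of $(2)\Leftrightarrow(3)$ and $(1)\Rightarrow(2)$ match the paper's Observations \ref{uniform_retract=>locally_smalleable}, \ref{lsmalleable=>urr} and \ref{property=>urr}. The two hard implications, however, each contain a genuine gap.

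For $(3)\Rightarrow(4)$, the substitution $v\mapsto h_i(y)^N v$ with $h_i$ merely vanishing on $Y\setminus V_i$ does not make $\sigma_i$ regular across the boundary. Writing $\sigma_i=P/Q$ with $\Z(Q)\cap M_i=\emptyset$ and using that $P$ lies in the ideal $(v_1,\dots,v_{n_i})$ near the zero section (after normalising $\sigma_i(y,0)=y$ away), one gets
\begin{equation*}
\sigma_i(y,h^Nv)=\frac{h^N\sum_jv_jP_j(y,h^Nv)}{Q(y,0)+h^N\sum_jv_jQ_j(y,h^Nv)},
\end{equation*}
so regularity along $Y\times\{0\}$ requires $Q(\cdot,0)$ to divide $h^N$ in $\RR(Y)$. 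Over $\R$ this is not implied by an inclusion of zero sets: the real Nullstellensatz only yields $h^{2N}+(\text{sum of squares})\in\bigl(Q(\cdot,0)\bigr)$, and for instance no power of $x$ is divisible by $x^2+y^2$ in $\RR(\R^2)$. The paper's Lemma \ref{gluing_lem1} circumvents this by rescaling by the denominator $Q(y,0)$ \emph{itself}, so that the factor cancels and the new denominator $1+\sum_jv_jQ_j$ is nonvanishing on the zero section. You also leave the construction of $\tau$ entirely open; this is where most of the work lies (the paper builds a Chinese-remainder idempotent $\theta$ that switches between $\tau_1$ and $\tau_2$ across the two charts and applies Lemma \ref{gluing_lem1} a second time to the resulting composite), and it is not a routine verification.

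For $(4)\Rightarrow(1)$, the formula $\widetilde f=\sigma(g,\phi)$ with $\phi$ a global approximation of $\tau(g,f)$ is problematic on a non-compact $C$. First, your finite-chain repair for $(g,f)\not\subset N$ amounts to reproving the approximation theorem for malleable varieties, which the paper simply imports (\cite[Theorem 1.14]{Banecki+Kucharz} via Proposition \ref{smalleable=>malleable}); a naive chain fails because the width of $N$ around the diagonal can degenerate at infinity. Second, and more seriously, for $\sigma(g,\phi)$ to be defined at \emph{every} point of $C$ you need $(g(x),\phi(x))\in M$ for all $x\in C$, which is a global sup-norm constraint on $\phi$ that compact-open approximation does not provide. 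The paper's resolution is structural: it only uses $\tau(h,f)$ on the compact set $Z\cap C$ (so only $(h,f)(Z\cap C)\subset N$ is needed), extends it to a regular $\psi$ on $C$ with $\sup_C\|\psi\|\le 2\varepsilon<1$ by a Łojasiewicz argument (Lemma \ref{extension_lem}), and normalises the spray so that $Y\times\B(0,1)\subset M$ (Lemma \ref{ball_in_M}). Without these normalisations your $\widetilde f$ need not be well defined, so this step needs to be reworked along those lines.
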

In the remainder of this section we will prove the equivalences \begin{equation*}
    (\ref{(ap)})\implies (\ref{(urr)})\iff (\ref{(lsmalleable)}).
\end{equation*} 
Then in Section \ref{sec:gluing} we prove $(\ref{(lsmalleable)})\implies (\ref{(smalleable)})$ and in Section \ref{sec:C0} we prove $(\ref{(smalleable)})\implies (\ref{(ap)})$.


\begin{obs}\label{property=>urr}
If an affine variety $Y$ satisfies the approximation property of Theorem \ref{cont_app_thm}, then it is uniformly retract rational. 
\begin{proof}
Choose a point $a\in Y$. Fix an embedding $Y\subset\R^n$. By \cite[Theorem 1]{retraction}, there exists an open Euclidean neighbourhood $U$ of $Y$ in $\R^n$ which retracts onto $Y$ through a continuous semialgebraic retraction $p:U\rightarrow Y$. Fix a closed ball $\overline{\B}(a,\varepsilon)$ centered at $a$ contained in $U$. The ball is contractible, so $p\vert_{\overline{\B}(a,\varepsilon)}$ is homotopically trivial. We can apply the assumption on $Y$ (with $X=\R^n, C=\overline{\B}(a,\varepsilon), Z=Y$), to find a regular approximation $\widetilde{p}:\overline{\B}(a,\varepsilon)\rightarrow Y$ of $p\vert_{\overline{\B}(a,\varepsilon)}$, equal to the identity on $\overline{\B}(a,\varepsilon)\cap Y$. The mapping $\widetilde{p}$ extends to a regular mapping $\widetilde{r}:\widetilde{U}\rightarrow Y$, where $\widetilde{U}$ is a Zariski open subset of $\R^n$. Finally taking $V:=\widetilde{U}\cap Y$ we get that $V$ is a globally retract rational variety containing $a$, with the retraction defined by 
\begin{align*}
    W&:=\widetilde{r}^{-1}(V), \\
    r&:=\widetilde{r}\vert_{W}, \\
    i&: V\rightarrow W \text{ being the inclusion.}
\end{align*}
\end{proof}
\end{obs}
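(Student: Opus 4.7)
The plan is to fix an arbitrary point $a\in Y$ and produce a Zariski open neighbourhood $V$ of $a$ in $Y$ which is globally retract rational. My strategy is to use the assumed approximation property to upgrade a continuous semialgebraic retraction of a Euclidean neighbourhood of $Y$ onto $Y$ into a regular mapping near $a$. The first step is to embed $Y$ as a Zariski closed subvariety of some $\R^n$ and invoke a classical result on semialgebraic retracts (such as \cite[Theorem 1]{retraction}) to obtain a Euclidean open neighbourhood $U$ of $Y$ in $\R^n$ together with a continuous semialgebraic retraction $p:U\rightarrow Y$. Localising, I then choose $\varepsilon>0$ small enough that $\overline{\B}(a,\varepsilon)\subset U$ and consider $f:=p\vert_{\overline{\B}(a,\varepsilon)}$.

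Next I would apply the approximation property of Theorem \ref{cont_app_thm} to $f$, with ambient variety $X:=\R^n$, set $C:=\overline{\B}(a,\varepsilon)$ (closed and semialgebraic), and closed subvariety $Z:=Y$. The hypotheses are readily verified: $Z\cap C=Y\cap \overline{\B}(a,\varepsilon)$ is compact since $Y$ is closed in $\R^n$; the restriction $f\vert_{Z\cap C}$ is the identity and hence regular; and $f$ is homotopic to a constant (hence regular) map because the closed ball is contractible. The property thus supplies a regular mapping $\widetilde{p}:\overline{\B}(a,\varepsilon)\rightarrow Y$ approximating $f$ and coinciding with the identity on $Y\cap\overline{\B}(a,\varepsilon)$.

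Finally, by the extended notion of regularity recalled in the introduction, $\widetilde{p}$ extends to a regular mapping $\widetilde{r}:\widetilde{U}\rightarrow Y$ on a Zariski open set $\widetilde{U}\subset\R^n$; after possibly shrinking, we may assume $a\in\widetilde{U}$. The natural candidate for the retraction data is then $V:=\widetilde{U}\cap Y$, $W:=\widetilde{r}^{-1}(V)$, $r:=\widetilde{r}\vert_W$, and $i:V\hookrightarrow W$ the inclusion, with the identity $r\circ i=\id_V$ amounting to $\widetilde{r}$ being the identity on all of $V$. I expect the main technical subtlety to be confirming precisely this last point: the identity-on-a-Euclidean-ball condition must be propagated to the whole Zariski open set $V$. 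This should follow from a Zariski density argument --- after possibly shrinking $V$ to a Zariski open neighbourhood of $a$ avoiding irreducible components of $Y$ not passing through $a$ --- since $Y\cap\overline{\B}(a,\varepsilon)$ has nonempty Euclidean interior in each irreducible component of $Y$ through $a$, forcing any Zariski closed subset of $V$ containing it to cover these components.
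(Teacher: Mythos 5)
Your argument is correct and follows essentially the same route as the paper's proof: a semialgebraic retraction of a Euclidean neighbourhood onto $Y$, restriction to a closed ball about $a$, an application of the approximation property with $X=\R^n$, $C=\overline{\B}(a,\varepsilon)$, $Z=Y$, and extension of the resulting regular map to a Zariski open set $\widetilde{U}$ yielding the retraction data $V$, $W$, $i$, $r$. The only difference is that you make explicit the Zariski-density argument needed to propagate $\widetilde{r}=\id$ from $Y\cap\overline{\B}(a,\varepsilon)$ to all of $V$, a point the paper leaves implicit.
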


\begin{obs}\label{uniform_retract=>locally_smalleable}
Uniformly retract rational varieties are locally strongly malleable.
\begin{proof}
The question is local so it suffices to prove that a globally retract rational variety $Y$ is strongly malleable. Take $W\subset \R^n$, $i:Y\rightarrow W$, $r:W\rightarrow Y$ as in the definition of global retract rationality. It now suffices to define:
\begin{align*}
    N&:=Y\times  Y \\
    M&:= \{(y,v)\in Y\times \R^n: i(y)+v\in W\} \\
    \sigma(y,v)&:= r(i(y)+v) \\
    \tau(y,z)&:= i(z)-i(y).
\end{align*}
\end{proof}
\end{obs}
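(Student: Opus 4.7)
The plan is first to reduce to the global case and then construct an explicit strong dominating spray directly from the retraction data. Both properties in the statement are by definition local in $Y$---each asks that every point of $Y$ admit a Zariski open neighbourhood of a certain type---so it is enough to show that any globally retract rational variety $Y$ is (globally) strongly malleable.

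So fix Zariski open $W\subset\R^n$ together with regular maps $i:Y\to W$ and $r:W\to Y$ satisfying $r\circ i=\id_Y$. The guiding idea is that $i$ realises $Y$ inside the affine space $\R^n$, where points close to $i(y)$ arise simply by vector translation, after which $r$ projects back to $Y$. This suggests taking the ambient dimension $n$ as the spray dimension and setting
\begin{align*}
M &:= \{(y,v)\in Y\times\R^n : i(y)+v\in W\},\\
\sigma(y,v) &:= r(i(y)+v),\\
N &:= Y\times Y,\\
\tau(y,z) &:= i(z)-i(y).
\end{align*}

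The remaining task is to verify the axioms of a strong dominating spray. The set $M$ is the preimage of the Zariski open set $W$ under the regular map $(y,v)\mapsto i(y)+v$, hence Zariski open, and it contains $Y\times\{0\}$ because $i(Y)\subset W$; the set $N=Y\times Y$ is trivially Zariski open and contains the diagonal. For any $(y,z)\in N$ we compute $i(y)+\tau(y,z)=i(z)\in W$, so $(y,\tau(y,z))\in M$, and then $\sigma(y,\tau(y,z))=r(i(z))=z$ by $r\circ i=\id_Y$. Finally $\tau(y,y)=i(y)-i(y)=0$. I do not anticipate any real obstacle here: the entire argument amounts to repackaging the retract rationality data $(i,r,W)$ as spray data, and the only minor care needed is checking that $M$ really is open and that axiom (1) of the spray holds on the full set $N=Y\times Y$, both of which follow immediately from the identity $i(y)+\tau(y,z)=i(z)$.
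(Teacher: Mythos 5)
Your construction is exactly the one in the paper: the same $M$, $N$, $\sigma$, $\tau$ built from the retraction data $(i,r,W)$, with the paper leaving the routine verification of the spray axioms implicit where you spell it out. The argument is correct.
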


\begin{obs}\label{lsmalleable=>urr}
Locally strongly malleable varieties are uniformly retract rational.
\begin{proof}
Again after localising it suffices to prove that a strongly malleable variety $Y$ is uniformly retract rational. Let $(n,M,N,\sigma,\tau)$ be a strong dominating spray over $Y$. Fix a point $y\in Y$ and define:
\begin{align*}
    V&:=\{z\in Y: (y,z)\in N\},\\
    W&:=\{v\in \R^n:(y,v)\in M\},\\
    i(z)&:=\tau(y,z),\\
    r(v)&:=\sigma(y,v).
\end{align*}
\end{proof}
\end{obs}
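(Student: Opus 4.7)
The plan is to argue locally. Since the property of being uniformly retract rational is local on $Y$, I can fix a point $y_0 \in Y$ and, by hypothesis, pass to a Zariski open neighborhood of $y_0$ on which $Y$ is strongly malleable. So it suffices to show that every strongly malleable variety is uniformly retract rational.

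Let $(n, M, N, \sigma, \tau)$ be a strong dominating spray over $Y$ and fix an arbitrary point $y \in Y$. The natural move is to ``freeze'' the first slot of the spray at $y$: I would define
\begin{align*}
    V &:= \{z \in Y : (y,z) \in N\}, \\
    W_0 &:= \{v \in \R^n : (y,v) \in M\}, \\
    i(z) &:= \tau(y,z), \\
    r_0(v) &:= \sigma(y,v).
\end{align*}
Because $N$ contains the diagonal $\triangle_Y$ and $M$ contains $Y \times \{0\}$, the set $V$ is a Zariski open neighborhood of $y$ in $Y$ and $W_0$ is a Zariski open subset of $\R^n$ containing $0$. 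The maps $i \colon V \to \R^n$ and $r_0 \colon W_0 \to Y$ are regular by construction.

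The retraction identity then falls out of the spray axioms. For any $z \in V$, property (1) of the spray gives $(y, \tau(y,z)) \in M$, which means $i(z) \in W_0$, and $\sigma(y, \tau(y,z)) = z$, which means $r_0(i(z)) = z$. The only technical point to watch is that the definition of \emph{globally retract rational} requires the retraction to have codomain $V$, whereas $r_0$ a priori only takes values in $Y$. I would fix this by shrinking $W_0$ to
\begin{equation*}
    W := W_0 \cap r_0^{-1}(V),
\end{equation*}
which is still Zariski open in $\R^n$, still contains the origin (since $r_0(0) = \sigma(y,0) = y \in V$), and still contains $i(V)$ (since $r_0(i(z)) = z \in V$). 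Setting $r := r_0|_W \colon W \to V$ and keeping $i \colon V \to W$, one gets $r \circ i = \id_V$, exhibiting $V$ as globally retract rational. Since $y$ was arbitrary, $Y$ is uniformly retract rational. I do not anticipate a serious obstacle here: once the spray is ``frozen'' at $y$, the two spray axioms translate directly into the retraction identity, and the codomain restriction is purely bookkeeping.
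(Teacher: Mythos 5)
Your proposal is correct and follows essentially the same route as the paper: localise, freeze the first slot of the spray at the chosen point $y$, and read off the retraction from the spray axioms. The only difference is that you explicitly shrink $W_0$ to $W_0\cap r_0^{-1}(V)$ so the retraction lands in $V$, a bookkeeping step the paper leaves implicit.
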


Lastly, we note that strongly malleable varieties are malleable in the sense of \cite[Definition 2.1]{real_homogeneous}. To prove that, it is convenient to introduce the following lemma, which will be also useful later on.
\begin{lem}\label{ball_in_M}
Let $Y$ be a strongly malleable variety. Then there exists a strong dominating spray $(n,M',N',\sigma',\tau')$ over $Y$ which additionally satisfies 
\begin{equation*}
    Y\times \B(0,1)\subset M'.
\end{equation*}
\begin{proof}
Let $(n,M,N,\sigma,\tau)$ be some strong dominating spray over $Y$ and assume that it does not satisfy the property $Y\times \B(0,1)\subset M$. Consider the function
\begin{align*}
    \varphi&:Y\rightarrow \R^+ \\
    \varphi(y)&:=\textnormal{dist}((y,0),\partial M)
\end{align*}
where $\partial M$ denotes the boundary $\overline{M}\backslash M$ of $M$. The function is semialgebraic and continuous, so from Łojasiewicz's inequality (\cite[Proposition 2.6.2]{RAG}) we deduce that there exists a regular function $f\in\RR(Y)$ which satisfies
\begin{equation*}
    0<f(y)<\varphi(y) \textnormal{ for every }y\in Y.
\end{equation*}
It now suffices to define a new strong dominating spray over $Y$ in the following way:
\begin{align*}
    M'&:=\{(y,v)\in Y\times\R^n:(y,f(y)v)\in M\}, \\
    N'&:=N,\\
    \sigma'(y,v)&:=\sigma(y,f(y)v), \\
    \tau'(y,z)&:=\frac{\tau(y,z)}{f(y)}.
\end{align*}
\end{proof}
\end{lem}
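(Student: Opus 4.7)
The plan is to rescale the velocity variable $v$ of the given spray by a positive regular function $f(y)$ so that a full unit ball of rescaled velocities lands inside the original domain $M$. Since $M$ is Zariski open and contains the zero section $Y\times\{0\}$, around each $(y,0)$ there sits a Euclidean ball inside $M$ whose radius varies continuously and semialgebraically with $y$; the whole point is to damp $v$ down, by an algebraically defined amount, so as to fit uniformly into that ball.

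First I would fix an affine embedding $Y\subset\R^m$ and form the strictly positive, continuous, semialgebraic function
\begin{equation*}
\varphi(y) := \mathrm{dist}\bigl((y,0),\,(Y\times\R^n)\setminus M\bigr),
\end{equation*}
where the distance is measured in the ambient Euclidean metric on $\R^m\times\R^n$. Strict positivity comes from $(y,0)\in M$ and $M$ being open, while semialgebraicity and continuity are standard properties of the distance function. Then I would invoke a Łojasiewicz-type inequality (e.g.\ \cite[Proposition 2.6.2]{RAG}) to produce a regular function $f\in\RR(Y)$ with $0<f(y)<\varphi(y)$ for every $y\in Y$.

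With $f$ in hand, I would define
\begin{align*}
M' &:= \{(y,v)\in Y\times\R^n : (y,f(y)\,v)\in M\}, \\
N' &:= N, \\
\sigma'(y,v) &:= \sigma(y,f(y)\,v), \\
\tau'(y,z) &:= \tau(y,z)/f(y),
\end{align*}
and verify the spray axioms: the identity $\sigma'(y,\tau'(y,z))=\sigma(y,\tau(y,z))=z$ holds by direct cancellation of $f(y)$, and $\tau'(y,y)=0$ because $\tau(y,y)=0$. The key containment $Y\times\B(0,1)\subset M'$ is then automatic: for $\|v\|<1$ the point $(y,f(y)v)$ lies at Euclidean distance $f(y)\|v\|<\varphi(y)$ from $(y,0)$, hence inside $M$.

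The only step that is not routine bookkeeping is the construction of the regular damping factor $f$ pinched between $0$ and a merely continuous semialgebraic positive function $\varphi$, and this is exactly the content of the semialgebraic Łojasiewicz inequality as cited. Everything else reduces to a direct algebraic check.
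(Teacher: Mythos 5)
Your proposal is correct and follows essentially the same route as the paper: the same distance function $\varphi$ (distance to the complement of $M$ rather than to $\partial M$, which makes no difference here), the same appeal to Łojasiewicz's inequality to produce the regular minorant $f$, and the identical rescaled spray $(M',N',\sigma',\tau')$. The only cosmetic difference is that the paper explicitly sets aside the trivial case $Y\times\B(0,1)\subset M$ before forming $\varphi$, which you leave implicit.
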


\begin{prop}\label{smalleable=>malleable}
Strongly malleable varieties are malleable.
\begin{proof}
Note that thanks to Observations \ref{lsmalleable=>urr} and \ref{urr=>nonsingular} a strongly malleable variety $Y$ is non-singular. Let $(n,M,N,\sigma,\tau)$ be a strong dominating spray over $Y$, satisfying the condition $Y\times\mathbb{B}(0,1)\subset M$ of Lemma \ref{ball_in_M}. It is now easy to realise that the triple $(Y\times \R^n,p,s)$ is a dominating spray over $Y$, where $p:Y\times\R^n\rightarrow Y$ is the projection onto the first coordinate and $s:Y\times\R^n\rightarrow Y$ is defined by
\begin{equation*}
    s(y,v):=\sigma\left(y,\frac{v}{1+||v||^2}\right).
\end{equation*}
\end{proof}
\end{prop}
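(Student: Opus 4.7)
The plan is to take a strong dominating spray $(n, M, N, \sigma, \tau)$ on $Y$ and extract from it a dominating spray in the sense of \cite[Definition 2.1]{real_homogeneous}. Before starting I would note that non-singularity of $Y$ is automatic here, by chaining Observations \ref{lsmalleable=>urr} and \ref{urr=>nonsingular}, so the definition of malleability does apply to $Y$. The core obstacle to reusing the strong spray directly is that $M$ is only a Zariski open neighbourhood of $Y\times\{0\}$ inside $Y\times\R^n$, whereas a dominating spray requires a \emph{globally} defined regular morphism from an affine variety to $Y$.

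To overcome this, I would first normalise the spray via Lemma \ref{ball_in_M} so that the whole unit tube $Y\times\B(0,1)$ lies inside $M$. Then I would form the trivial bundle $E := Y\times\R^n$ with projection $p:E\to Y$ onto the first factor, and set
\[
    s(y,v) := \sigma\!\left(y,\frac{v}{1+\|v\|^2}\right).
\]
Since the regular mapping $v\mapsto v/(1+\|v\|^2)$ sends $\R^n$ into $\B(0,1)$, the composition $s$ is well-defined and regular on all of $E$.

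It then remains to verify the defining property of a dominating spray: for each $y\in Y$ the restriction $s|_{\{y\}\times\R^n}$ should send a distinguished point of the fibre to $y$ and have surjective derivative there. Taking $v=0$ gives $s(y,0) = \sigma(y,0) = y$ by the remark following the definition of a strong dominating spray. The reparametrisation $v\mapsto v/(1+\|v\|^2)$ has derivative $\id$ at the origin, so one is reduced to checking that $v\mapsto \sigma(y,v)$ has surjective derivative at $v=0$. This follows by differentiating the identity $\sigma(y,\tau(y,z)) = z$ at $z=y$ and using $\tau(y,y)=0$. The whole argument is a straightforward rescaling trick; the only nontrivial input is the prior Lemma \ref{ball_in_M}, and I do not anticipate any genuine obstacle beyond unwinding the definitions.
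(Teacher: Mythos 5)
Your proposal is correct and follows essentially the same route as the paper: normalise the spray via Lemma \ref{ball_in_M} so that $Y\times\B(0,1)\subset M$, then take $p$ to be the projection and $s(y,v)=\sigma\left(y,\frac{v}{1+\|v\|^2}\right)$. The verification you spell out (that $s(y,0)=y$ and that the fibre derivative is surjective, obtained by differentiating $\sigma(y,\tau(y,z))=z$ at $z=y$) is exactly what the paper leaves implicit as ``easy to realise''.
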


\section{Gluing strong dominating sprays}\label{sec:gluing}
In this section we prove the implication $(\ref{(lsmalleable)})\implies (\ref{(smalleable)})$ of Theorem \ref{urr_equivalences}.
\begin{obs}\label{gluing_obs1}
Let $Y$ be an irreducible variety, $n$ a natural number and $U\subset Y\times\R^n$ a Zariski open set. Then, the ideal $I:=\I(U\cap (Y\times \{0\}))$ of $\RR(U)$ is generated by $v_1,\dots,v_n$, where $v_1,\dots,v_n$ are the coordinates of $\R^n$.
\begin{proof}
First, since $\RR(U)$ is a localisation of $\RR(Y\times \R^n)$ it suffices to verify the claim when $U=Y\times\R^n$. Fixing an embedding $Y\subset\R^m$ it suffices to prove the claim only for elements of $I\cap \mathcal{P}(Y)[v_1,\dots,v_n]$, where $\mathcal{P}(Y)$ is the polynomial ring of $Y$. After such reduction the claim follows easily.
\end{proof}
\end{obs}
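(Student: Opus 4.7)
The plan is a two-step reduction. First, I would localise to the case $U = Y\times\R^n$; second, within that case, I would reduce to representatives that are genuine polynomials in $v_1,\dots,v_n$, at which point the claim becomes a triviality about constant terms with respect to the $v_i$.

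For the first step, I would exploit the characteristically real-algebraic fact that every Zariski open subset of an affine variety is a basic open set: if $h_1,\dots,h_k\in\RR(Y\times\R^n)$ cut out the complement of $U$, then $h := h_1^2+\cdots+h_k^2$ satisfies $U = \{h\neq 0\}$, and standard real algebraic geometry identifies $\RR(U)$ with the localisation $\RR(Y\times\R^n)[h^{-1}]$. Any $f\in\RR(U)$ vanishing on $U\cap(Y\times\{0\})$ can then be written as $f = g/h^k$ for some $g\in\RR(Y\times\R^n)$, and the product $hg$ vanishes on all of $Y\times\{0\}$: on the locus $\{h\neq 0\}$ because $g$ does, on $\{h=0\}$ because $h$ does. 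Granting the claim for $Y\times\R^n$, one writes $hg = \sum_i v_i r_i$ with $r_i\in\RR(Y\times\R^n)$ and divides through by $h^{k+1}$ to produce the required expression for $f$ as an element of $(v_1,\dots,v_n)\RR(U)$.

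For the second step I would fix an embedding $Y\subset\R^m$ and represent an arbitrary $f\in\RR(Y\times\R^n)$ as $p/q$ with $p, q \in \mathcal{P}(Y)[v_1,\dots,v_n]$ and $q$ nowhere vanishing on $Y\times\R^n$. If $f$ vanishes on $Y\times\{0\}$, then so does $p$. Splitting off the constant term in the $v_i$, write $p = p_0(y) + \sum_i v_i p_i(y,v)$ with $p_0\in\mathcal{P}(Y)$; evaluating at $v = 0$ forces $p_0$ to vanish on $Y$, i.e.\ $p_0 = 0$ in $\mathcal{P}(Y)$. Hence $p = \sum_i v_i p_i$ and $f = \sum_i v_i (p_i/q)$ lies in $(v_1,\dots,v_n)\RR(Y\times\R^n)$.

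The only point of substance I anticipate is the localisation identity $\RR(U) = \RR(Y\times\R^n)[h^{-1}]$ of Step 1, which is where working over $\R$ (via the sum-of-squares trick for writing $U$ as a basic open set) is genuinely used; the remainder of the argument is formal bookkeeping. Irreducibility of $Y$ does not appear essential for the statement itself, but ensures that $\RR(Y\times\R^n)$ is a domain and so sidesteps any zero-divisor subtleties in the localisation step.
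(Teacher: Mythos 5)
Your overall strategy (localise to $U=Y\times\R^n$, then reduce to polynomial representatives and kill the constant term in $v$) is exactly the paper's, and your Step 2 is correct and complete. The problem is the specific localisation identity you single out as the crux of Step 1: the claim that $\RR(U)=\RR(Y\times\R^n)[h^{-1}]$, where $h$ is a sum of squares cutting out the complement of $U$, is false in general. Take $X=\R^2$, $U=\R^2\setminus\{0\}$, $h=x^2+y^2$. The function $1/(x^2+y^4)$ is regular on $U$, but $h^k/(x^2+y^4)$ is never regular at the origin (in the UFD $\R[x,y]$ the fraction is already in lowest terms and its denominator vanishes there), so $1/(x^2+y^4)\notin\RR(\R^2)[h^{-1}]$. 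The point is that $\Z(q)\subset\Z(h)$ gives, via the real Nullstellensatz, only $h^{2N}+\sigma\in(q)$ for some sum of squares $\sigma$, not $q\mid h^{k}$. The correct statement, and the one the paper invokes, is that $\RR(U)$ is the localisation of $\RR(Y\times\R^n)$ at the multiplicative set of all regular functions nowhere vanishing on $U$.

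This is easily repaired and the rest of your Step 1 survives: write $f=g/q$ with $q$ nowhere vanishing on $U$, note that $g=qf$ vanishes on $U\cap(Y\times\{0\})$, and then either observe that this set is Zariski dense in the irreducible set $Y\times\{0\}$ (this is where irreducibility of $Y$ actually earns its keep, contrary to your closing remark), or run your own trick verbatim: $hg$ vanishes on all of $Y\times\{0\}$, so $hg=\sum_i v_ir_i$ and $f=\sum_i v_i\bigl(r_i/(hq)\bigr)$ with $r_i/(hq)\in\RR(U)$ since $h$ and $q$ are both nowhere zero on $U$. With that one correction the proof is complete and coincides with the paper's intended argument.
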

The following lemma is fundamental, it has appeared before in a slightly different version as \cite[Lemma 6.4]{Banecki+Kucharz}. It will find its use twice in the proof of the implication $(\ref{(lsmalleable)})\implies(\ref{(smalleable)})$ of Theorem \ref{urr_equivalences}.
\begin{lem}\label{gluing_lem1}
Let $Y$ be an irreducible variety and let $n$ be a natural number. Let $U\subset Y\times\R^n$ be a Zariski open set such that $U\cap (Y\times\{0\})\neq\emptyset$. Let $f\in\RR(U)$ and let $P,Q\in\R(Y\times\R^n)$ be two rational functions, regular on a Zariski neighbourhood of $Y\times\{0\}$ satisfying
\begin{equation*}
    Qf=P\textnormal{ on a Zariski neighbourhood of } U\cap (Y\times\{0\}).
\end{equation*}
Suppose further that $f\vert_{U\cap (Y\times\{0\})}$ admits a regular extension onto the entire variety $Y\times\{0\}$. Then, the function
\begin{equation*}
    g(y,v):=f(y,Q(y,0)v)
\end{equation*}
defined on a Zariski neighbourhood of $U\cap (Y\times\{0\})$ admits a regular extension to a Zariski neighbourhood of $Y\times\{0\}$.
\begin{proof}
Using the assumption on $f$, we deduce that there exists $F\in\RR(Y\times\R^n)$ satisfying $f\vert_{U\cap (Y\times\{0\})}=F\vert_{U\cap (Y\times\{0\})}$. After substituting $f':=f-F$ and $P'=P-QF$ we can assume that $f$ vanishes on $U\cap (Y\times\{0\})$. Hence $P$ vanishes on $Y\times\{0\}$, so by Observation \ref{gluing_obs1} it can be written as $\sum_i v_iP_i$
for some $P_i$ regular on a Zariski neighbourhood of $Y\times\{0\}$. Similarly, we have that $Q(y,v)-Q(y,0)$ vanishes on $Y\times\{0\}$ so it can be written as $\sum_i v_iQ_i$ for some $Q_i$ regular on a Zariski neighbourhood of $Y\times\{0\}$. Altogether we have
\begin{multline*}
    f(y,Q(y,0)v)= \frac{P(y,Q(y,0)v)}{Q(y,Q(y,0)v)}=\\
    =\frac{\sum_i Q(y,0)v_iP_i(y,Q(y,0)v)}{Q(y,0)+\sum_i Q(y,0)v_iQ_i(y,Q(y,0)v)} 
    = \frac{\sum_iv_iP_i(y,Q(y,0)v)}{1+\sum_i v_iQ_i(y,Q(y,0)v)}.
\end{multline*}
The above denominator does not vanish at any point of $Y\times\{0\}$, hence the fraction gives a regular representation of $g$ on a Zariski neighbourhood of $Y\times\{0\}$.
\end{proof}
\end{lem}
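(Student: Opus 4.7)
The plan is to first reduce to the case where $f$ vanishes on $U\cap(Y\times\{0\})$, then exploit the rational expression $f=P/Q$ together with Observation \ref{gluing_obs1} to rewrite $g$ as an explicit ratio whose denominator does not vanish anywhere on $Y\times\{0\}$.

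For the reduction, I would pick a regular extension $F\in\RR(Y\times\R^n)$ of $f\vert_{U\cap(Y\times\{0\})}$ (or rather, of some regular extension onto $Y\times\{0\}$, pulled back along the projection $Y\times\R^n\to Y\times\{0\}$). Replacing $f$ by $f-F$ and $P$ by $P-QF$ does not affect the hypotheses, and after this substitution $f$ vanishes on $U\cap(Y\times\{0\})$. Since $Qf=P$ on a Zariski neighborhood of that set, $P$ vanishes there, and by irreducibility of $Y\times\R^n$ and the fact that $U\cap(Y\times\{0\})$ is Zariski dense in $Y\times\{0\}$, we conclude that $P$ vanishes on all of $Y\times\{0\}$.

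Now Observation \ref{gluing_obs1} applies twice: since $P$ is regular near $Y\times\{0\}$ and vanishes there, we can write $P=\sum_i v_iP_i$ with $P_i$ regular near $Y\times\{0\}$; since $Q(y,v)-Q(y,0)$ is also regular near $Y\times\{0\}$ and vanishes there, we can write $Q(y,v)-Q(y,0)=\sum_i v_iQ_i$ with $Q_i$ regular near $Y\times\{0\}$. Substituting $v\mapsto Q(y,0)v$ into $f=P/Q$ then produces a common factor of $Q(y,0)$ in both numerator and denominator:
\[
g(y,v)=\frac{\sum_i Q(y,0)v_i\,P_i(y,Q(y,0)v)}{Q(y,0)+\sum_i Q(y,0)v_i\,Q_i(y,Q(y,0)v)}.
\]
Cancelling $Q(y,0)$ gives a representation whose denominator equals $1$ on $Y\times\{0\}$, hence is non-vanishing on a Zariski neighborhood of $Y\times\{0\}$, yielding the desired regular extension of $g$.

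The only subtle step is the reduction: one must be careful that after replacing $f$ by $f-F$ and $P$ by $P-QF$, the identity $Qf=P$ still holds on a Zariski neighborhood of $U\cap(Y\times\{0\})$ (which it does tautologically) and that $P-QF$ is still regular near $Y\times\{0\}$ (which holds because $Q$ and $F$ are). The rest is a direct algebraic computation whose only real content is the cancellation of $Q(y,0)$ — this is exactly where the peculiar substitution $v\mapsto Q(y,0)v$ earns its keep, converting a denominator vanishing on $\{Q(y,0)=0\}$ into one equal to $1$ on $Y\times\{0\}$.
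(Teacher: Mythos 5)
Your proposal is correct and follows essentially the same route as the paper: reduce to the case where $f$ vanishes on $U\cap(Y\times\{0\})$ via the extension $F$, decompose both $P$ and $Q(y,v)-Q(y,0)$ using Observation \ref{gluing_obs1}, and cancel the common factor $Q(y,0)$ after the substitution $v\mapsto Q(y,0)v$ to obtain a denominator equal to $1$ on $Y\times\{0\}$. Your remark justifying why $P$ vanishes on all of $Y\times\{0\}$ (Zariski density of $U\cap(Y\times\{0\})$) is a small point the paper leaves implicit.
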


\begin{thm}\label{lsmalleable=>smalleable}
Locally strongly malleable varieties are strongly malleable.
\begin{proof}
Let $Y$ be a locally strongly malleable variety. Thanks to Observations \ref{lsmalleable=>urr} and \ref{urr=>nonsingular}, $Y$ is non-singular, so it is a disjoint union of its irreducible components. It is easy to see that it suffices to prove that each of the components is strongly malleable, so we assume that $Y$ is irreducible. By induction we can also assume that $Y=Y_1\cup Y_2$ with $Y_1,Y_2$ being Zariski open subsets of $Y$ which are strongly malleable. Let $(n_i, M_i,N_i,\sigma_i,\tau_i)$ for $i=1,2$ be strong dominating sprays over $Y_1,Y_2$ respectively. We naturally treat the sets $N_1, N_2$ as Zariski open subsets of $Y\times Y$, and similarly we treat $M_1,M_2$ as subsets of $Y\times \R^{n_1},Y\times \R^{n_2}$ respectively. We will construct a strong dominating spray over $Y$ in a few steps.

\begin{step}
Assuming that $Y\times\{0\}\subset M_1,Y\times\{0\}\subset M_2$.
\end{step}
So far $M_1$ is by definition a subset of $Y_1\times\R^{n_1}\subset Y\times \R^{n_1}$. We will alter the spray over $Y_1$ slightly, so that the entire set $Y\times \{0\}$ is contained in $M_1$ and the conditions
\begin{gather*}
    \triangle_{Y_1}\subset N_1, \\
    (y,\tau_1(y,z))\in M_1\text{ and }\sigma_1(y,\tau_1(y,z))=z\text{ for all }(y,z)\in N_1, \\
    \tau_1(y,y)=0\text{ for }y\in Y_1    
\end{gather*}
are still satisfied.

To do that, fix an embedding of $Y$ as a Zariski closed subset of $\R^m$ for some $m$ and take $Q\in\RR(Y\times \R^{n_1})$, $\Z(Q)\cap M_1=\emptyset$ to be a common denominator of all coordinates of $\sigma_1$, i.e. a regular function such that the product
\begin{equation*}
    P:=Q\sigma_1
\end{equation*}
extends to a regular mapping $P:Y\times\R^{n_1}\rightarrow \R^m$. Define the new spray over $Y_1$ by
\begin{align*}
    N_1'&:=N_1,\\
    \sigma_1'(y,v)&:=\sigma_1(y,Q(y,0)v),\\ 
    \tau_1'(y,z)&:=\frac{\tau_1(y,z)}{Q(y,0)}.
\end{align*}
We now also have to redefine $M_1$. We can define it as the maximal domain consisting of points at which the mapping $\sigma_1'$ is regular, which thanks to Lemma \ref{gluing_lem1} does contain $Y\times\{0\}$.

We also perform an analogous reduction in the case of $M_2$.

\begin{step}\label{step_def_spray}
Defining the spray over $Y$.
\end{step}
We will now gradually define all the components of a strong dominating spray over $Y$. First of all take $n:=n_1+n_2$. Now define $M$ and $\sigma$ by the following formulas:
\begin{align*}
    M&:=\{(y,v,w)\in Y\times \R^{n_1}\times\R^{n_2}:(y,v)\in M_1,(\sigma_1(y,v),w)\in M_2\},\\
    \sigma(y,v,w)&:=\sigma_2(\sigma_1(y,v),w).
\end{align*}
It is clear that $M$ is a Zariski open neighbourhood of $Y\times \{0\}$ in $Y\times\R^n$.

Let now $\widetilde{N}$ be some Zariski neighbourhood of the diagonal $\triangle_Y$ in $Y\times Y$ and $\beta:\widetilde{N} \rightarrow \R^{n_1}$ be some regular mapping which vanishes on the diagonal, both to be defined later.

We define $\tau$ by the following formula:
\begin{equation}\label{def_of_tau}
    \tau(y,z):=\bigl(\beta(y,z),\tau_2(\sigma_1(y,\beta(y,z)),z)\bigr).
\end{equation}
This definition makes sense only on some Zariski neighbourhood of $\triangle_{Y_2}$; on this neighbourhood we have
\begin{equation*}
    \sigma(y,\tau(y,z))=\sigma_2\bigl(\sigma_1(y,\beta(y,z)),\tau_2(\sigma_1(y,\beta(y,z)),z)\bigr)=z.
\end{equation*}
If now we can define $\beta$ in such a way that the mapping $\tau$ extends as a regular mapping to a Zariski neighbourhood $N$ of the entire diagonal $\triangle_Y$, then $(n,M,N,\sigma,\tau)$ is a strong dominating spray over $Y$ and we are done.

\begin{step}
Defining $\beta$.
\end{step}
Choose $Q_1\in\RR(Y\times Y)$ to be a denominator of $\tau_1$, i.e. a regular function satisfying $\Z(Q_1)\cap N_1=\emptyset$ such that the product
\begin{equation*}
    P_1:=Q_1\tau_1
\end{equation*}
extends to a regular mapping $P_1:Y\times Y\rightarrow \R^{n_1}$. Choose $Q_2\in\RR(Y\times Y)$ in a similar way so that $\Z(Q_1)\cap N_2=\emptyset$ and $P_2:=Q_2\tau_2$ extends to a regular mapping $P_2:Y\times Y\rightarrow \R^{n_2}$.

Now note that the functions
\begin{align*}
    (y,z)&\mapsto Q_1(y,z),\\
    (y,z)&\mapsto Q_2(z,z)
\end{align*}
do not share a common zero in the set $\widetilde{N}:=N_1\cup (Y\times Y_2)$. This means that we can apply the Chinese reminder theorem to find a function $\theta\in\RR(\widetilde{N})$ satisfying the conditions
\begin{align*}
    Q_1(y,z)&\text{ divides }\theta(y,z) \text{ in }\RR(\widetilde{N}),\\
    Q_2(z,z)&\text{ divides }\theta(y,z)-1 \text{ in }\RR(\widetilde{N}).
\end{align*}
Finally, define $\beta(y,z):=\theta(y,z)\tau_1(y,z)$. As $Q_1$ divides $\theta$, we have that $\beta$ extends as a regular mapping to the entire set $\widetilde{N}$ and vanishes on the diagonal. Finally, it remains to show that under such a definition of $\beta$ the mapping $\tau$ extends to a Zariski neighbourhood of the entire diagonal $\triangle_Y$.
\begin{step}
    Verifying that $\tau$ is defined correctly.
\end{step}
As noted in Step \ref{step_def_spray}, $\tau$ a priori is defined correctly on a neighbourhood of $\triangle_{Y_2}$ so it remains to show that it extends as a regular mapping to a neighbourhood of $\triangle_{Y_1}$.

Consider the following auxiliary mapping
\begin{equation*}
    h(y,z,v):=\tau_2(\sigma_1(y,v+\tau_1(y,z)),z)
\end{equation*}
defined on some Zariski neighbourhood $U$ of $\Delta_{Y_1\cap Y_2}\times\{0\}$ in $N_1\times\R^{n_1}$. On $U$ we have
\begin{equation*}
    Q_2(\sigma_1(y,v+\tau_1(y,z)),z)h(y,z,v)=P_2(\sigma_1(y,v+\tau_1(y,z)),z).
\end{equation*}
Since $h$ is constantly equal to $0$ on $U \cap (N_1\times \{0\})$, we can apply Lemma \ref{gluing_lem1} to all coordinates of $h$ to obtain that
\begin{equation*}
    g(y,z,v):=h(y,z,Q_2(\sigma_1(y,0+\tau_1(y,z)),z)v)=h(y,z,Q_2(z,z)v)
\end{equation*}
extends as a regular mapping to a Zariski neighbourhood of $N_1\times\{0\}$ in $N_1\times\R^{n_1}$. Finally the second component of $\tau$ can be written as
\begin{equation*}
    h(y,z,\beta(y,z)-\tau_1(y,z))=g\left(y,z,\frac{\theta(y,z)-1}{Q_2(z,z)}\tau_1(y,z)\right)
\end{equation*}
which extends as a regular mapping to a Zariski neighbourhood of $\Delta_{Y_1}$.
\end{proof}
\end{thm}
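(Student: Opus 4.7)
The plan is to prove the result by induction on a finite open covering of $Y$ by strongly malleable Zariski open sets. By Observations \ref{lsmalleable=>urr} and \ref{urr=>nonsingular}, $Y$ is non-singular and thus splits into its irreducible components, which can be handled independently; so I assume $Y$ is irreducible. The inductive step reduces to the following: given $Y = Y_1 \cup Y_2$ with $Y_1, Y_2$ Zariski open and strongly malleable, produce a strong dominating spray on $Y$ from given sprays $(n_i, M_i, N_i, \sigma_i, \tau_i)$ on $Y_i$.

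The natural candidate for the spray map on $Y$ is the composition
\[
\sigma(y, v, w) := \sigma_2(\sigma_1(y, v), w), \quad (y,v,w) \in Y \times \R^{n_1} \times \R^{n_2}.
\]
For this to be defined on a Zariski neighborhood of all of $Y \times \{0\}$ (not just $Y_1 \times \{0\}$), I first perform a technical rescaling of the spray over $Y_1$: choosing a common denominator $Q$ for the coordinates of $\sigma_1$ (relative to some fixed embedding $Y \subset \R^m$) nonvanishing on $M_1$, and replacing $\sigma_1(y,v)$ by $\sigma_1(y, Q(y,0) v)$ with $\tau_1$ divided accordingly by $Q(y,0)$. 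Lemma \ref{gluing_lem1} guarantees that the new $\sigma_1$ extends regularly across $Y \times \{0\}$. The same adjustment is made for $Y_2$.

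The real difficulty is defining $\tau$ regularly near the \emph{entire} diagonal of $Y \times Y$. Setting $\tau(y,z) := (\tau_1(y,z), 0)$ would formally satisfy $\sigma(y, \tau(y,z)) = z$ but $\tau_1$ is not regular near $\triangle_{Y_2 \setminus Y_1}$. The remedy is to replace $\tau_1(y,z)$ with $\beta(y,z) := \theta(y,z)\, \tau_1(y,z)$ for a cleverly chosen $\theta$, and then set
\[
\tau(y, z) := \bigl(\beta(y,z),\; \tau_2(\sigma_1(y, \beta(y,z)), z)\bigr).
\]
The choice of $\theta$ is governed by a Chinese Remainder Theorem argument on $\widetilde{N} := N_1 \cup (Y \times Y_2)$: taking $Q_i \in \RR(Y \times Y)$ to be denominators of $\tau_i$ with $\Z(Q_i) \cap N_i = \emptyset$, the functions $Q_1(y,z)$ and $Q_2(z,z)$ share no common zero on $\widetilde{N}$, so one finds $\theta \in \RR(\widetilde{N})$ with $Q_1 \mid \theta$ and $Q_2(z,z) \mid \theta - 1$. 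The first divisibility makes $\beta$ regular on all of $\widetilde{N}$ and vanishing on the diagonal, whence $\sigma(y, \tau(y,z)) = z$ holds where defined.

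The main obstacle is the final verification: regularity of $\tau$ on a Zariski neighborhood of all of $\triangle_Y$. Near $\triangle_{Y_2}$ this is immediate from the defining formula, but near points of $\triangle_{Y_1 \setminus Y_2}$ one must argue more carefully, since $\sigma_1(y, \beta(y,z))$ may leave $N_2$. I would resolve this by introducing the auxiliary map $h(y,z,v) := \tau_2(\sigma_1(y, v + \tau_1(y,z)), z)$ defined on a Zariski neighborhood of $\triangle_{Y_1 \cap Y_2} \times \{0\}$ in $N_1 \times \R^{n_1}$, observing that $h(y,z,0) = \tau_2(z,z) = 0$. Applying Lemma \ref{gluing_lem1} coordinate-wise shows that $(y,z,v) \mapsto h(y,z, Q_2(z,z)\, v)$ extends regularly across $N_1 \times \{0\}$. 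Writing the second component of $\tau$ as $h(y,z, \beta(y,z) - \tau_1(y,z))$ and using $Q_2(z,z) \mid \theta - 1$ to factor $\beta - \tau_1 = Q_2(z,z) \cdot \tfrac{\theta - 1}{Q_2(z,z)} \tau_1$, the expression acquires the form required for the regular extension across $\triangle_{Y_1}$.
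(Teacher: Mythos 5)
Your proposal is correct and follows essentially the same route as the paper's own proof: the same rescaling of the sprays via a common denominator and Lemma \ref{gluing_lem1}, the same composite $\sigma$, the same Chinese Remainder construction of $\theta$ on $\widetilde{N}=N_1\cup(Y\times Y_2)$ yielding $\beta=\theta\tau_1$, and the same auxiliary map $h$ with the factorization through $Q_2(z,z)$ to extend $\tau$ across $\triangle_{Y_1}$. No substantive differences to report.
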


\section{$\mathcal{C}^0$ approximation}\label{sec:C0}
In this section we prove the implication $(\ref{(smalleable)})\implies (\ref{(ap)})$ of Theorem \ref{urr_equivalences}.

\begin{lem}\label{extension_lem}
Let $X$ be an affine variety, $Z\subset X$ be a closed subvariety, and $C\subset X$ be a closed semialgebraic subset of $X$ such that $Z\cap C$ is compact. Let $f:Z\cap C\rightarrow \R^n$ be a regular mapping. Then, there exists a regular mapping $F:C\rightarrow \R^n$, such that $F\vert_{Z\cap C}=f$ and $\sup_{x\in C}||F(x)||\leq 2\sup_{x\in Z\cap C}||f(x)||$.
\begin{proof}
To avoid the trivial case, we assume that $f$ is not constantly equal to zero. First of all note that by definition $f$ extends as a regular mapping to a Zariski locally closed subset $Z_0$ of $Z$ containing $Z\cap C$. From this set it can be extended to a regular mapping $G:C\rightarrow\R^n$, which we will improve further so that it satisfies the desired bound.

For now suppose that $C$ is not compact. Let $\varphi:X\rightarrow \R$ be some regular mapping vanishing precisely on the set $Z$. From Łojasiewicz's inequality (\cite[Proposition 2.6.4]{RAG}) we deduce that there exists an integer $N_0\geq 0$ such that the mapping $\psi:C\rightarrow \R$, $\psi(x):=(1+||x||^2)^{N_0}\varphi^2(x)$ escapes to infinity at infinity. Again by Łojasiewicz's inequality there exists an integer $N_1\geq 0$ such that $H:=(1+\psi)^{-N_1}G$ goes to zero at infinity. The set $A:=\{x\in C: ||H(x)||\geq2\sup_{x\in Z\cap C}||f(x)||\}$ is now compact and disjoint from $Z$. This implies that we can find an exponent $N_2\geq 0$ such that $F:=(1+\psi)^{-N_2}G$ satisfies $\sup_{x\in A}||F(x)||\leq 2\sup_{x\in Z\cap C}||f(x)||$. It follows that $F$ is the desired mapping that we are looking for.

If $C$ is compact the situation is easier, we can just apply the last step of the reasoning (i.e. take $N_0=N_1=0)$. In this case it is not even important that $C$ is semialgebraic.
\end{proof}
\end{lem}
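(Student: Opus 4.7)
The plan is first to produce any regular extension $G\colon C\to\R^n$ of $f$ without worrying about its size, and then to multiply $G$ by a regular damping factor that equals $1$ on $Z$ but is large away from $Z$, so that the product $F$ still agrees with $f$ on $Z\cap C$ and has uniformly small norm on $C$.

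For the extension step, the hypothesis that $f$ is regular on $Z\cap C$ gives, by the paper's convention, a regular extension to some Zariski locally closed subset $Z_0\subset X$ containing $Z\cap C$, and from this one can represent each coordinate of $f$ as $p_i/q_i$ with $p_i,q_i$ regular on $X$ and $q_i\ne 0$ on $Z\cap C$. Picking a regular function $\varphi\colon X\to\R$ whose zero set is exactly $Z$, I would take $G_i:=p_iq_i/(q_i^2+\varphi^2)$; the denominator can only vanish at a common zero of $q_i$ and $\varphi$, and no such point lies in $C$ by assumption, so $G_i$ is regular on all of $C$, while on $Z\cap C$ (where $\varphi=0$) it reduces to $p_i/q_i=f_i$.

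For the damping step set $M:=\sup_{Z\cap C}\|f\|$ and look for $F:=(1+\psi)^{-N}G$, where $\psi$ is a regular nonnegative function vanishing precisely on $Z$; then $F\vert_{Z\cap C}=f$ automatically. When $C$ is compact, the choice $\psi:=\varphi^2$ already works: the closed set $\{\|G\|\ge 2M\}\cap C$ is compact and disjoint from $Z\cap C$, so $\psi$ has a positive infimum there and a single sufficiently large $N$ uniformly kills the excess. The nontrivial case is $C$ non-compact, where two additional issues arise: $G$ may grow polynomially at infinity along $C$, and $\psi=\varphi^2$ may fail to be bounded below at infinity since points of $C$ could drift towards $Z$ out there. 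Both are handled by Łojasiewicz's inequality: the compactness of $Z\cap C$ forces $C$ to be disjoint from $Z$ outside a sufficiently large ball, hence by Łojasiewicz $\varphi$ is bounded below there by some negative power of $1+\|x\|^2$, so taking $N_0$ large enough the function $\psi(x):=(1+\|x\|^2)^{N_0}\varphi(x)^2$ tends to infinity along $C$. A second application of Łojasiewicz to the denominator of $G$ yields a polynomial bound on $\|G\|$ on $C$, so for $N_1$ large the function $(1+\psi)^{-N_1}G$ tends to zero at infinity along $C$, the set where its norm exceeds $2M$ is compact and disjoint from $Z$, and a further increase of the exponent to some $N_2\ge N_1$ brings $\|F\|$ below $2M$ on that compact locus; on its complement $\|F\|\le\|(1+\psi)^{-N_1}G\|<2M$ automatically, because raising the exponent only decreases $\|F\|$ pointwise.

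The main obstacle is precisely this non-compact case, where one must carefully sequence the three exponents $N_0,N_1,N_2$ and invoke Łojasiewicz's inequality twice. This is also the only step in which the hypothesis that $C$ be semialgebraic enters essentially; if $C$ were compact the whole argument collapses to a single boundedness argument for which semialgebraicity is irrelevant.
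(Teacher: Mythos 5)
Your proposal is correct and follows essentially the same route as the paper's proof: produce an initial regular extension $G$ of $f$, then damp it by $(1+\psi)^{-N}$ with $\psi(x)=(1+\|x\|^2)^{N_0}\varphi^2(x)$, using Łojasiewicz's inequality twice (once to make $\psi$ escape to infinity along $C$, once to bound the growth of $G$) and a final exponent $N_2\geq N_1$ to handle the remaining compact set. The only difference is that you supply an explicit formula for $G$ where the paper merely asserts its existence, and you make explicit the (needed) condition $N_2\geq N_1$ that the paper leaves implicit.
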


We now are in a position to prove the aforementioned implication:
\begin{prop}\label{cont_app_prop}
Let $Y$ be a strongly malleable variety. Then it satisfies the approximation property of Theorem \ref{cont_app_thm}    
\begin{proof}
Let $X,Z,C,f:C\rightarrow Y$ be as in Theorem \ref{cont_app_thm}, and let $(n,M,N,\sigma,\tau)$ be a strong dominating spray over $Y$. Thanks to Lemma \ref{ball_in_M} we assume that $Y\times\mathbb{B}(0,1)\subset M$. 

As noted in Proposition \ref{smalleable=>malleable}, strongly malleable varieties are malleable. Hence, applying \cite[Theorem 1.14]{Banecki+Kucharz}, $f$ can be approximated by regular mappings in the compact-open topology. Now choose $h:C\rightarrow Y$ to be a sufficiently close regular approximation of $f$ in this topology. We assume that $h$ is so close to $f$ that 
\begin{equation*}
    (h,f)(Z \cap C)\subset N.
\end{equation*}
Define
\begin{equation*}
    \varepsilon:=\sup_{x\in Z\cap C} ||\tau(h(x),f(x))||.
\end{equation*}
Again assuming that $h$ is close enough to $f$ we infer $\varepsilon<1/2$. We can now apply Lemma \ref{extension_lem} to the mapping 
\begin{align*}
    \varphi&:Z\cap C\rightarrow \R^n \\
    \varphi(x)&:=\tau(h(x),f(x))
\end{align*}
to extend it to a regular mapping $\psi:C\rightarrow\R^n$ satisfying $||\psi(x)||\leq 2\varepsilon$ for $x\in C$. Finally consider
\begin{equation*}
    \widetilde{f}:=\sigma(h,\psi).
\end{equation*}
It is a well defined mapping agreeing with $f$ on $Z\cap C$. Moreover, as $h$ approaches $f$ in the compact-open topology, $\varepsilon$ approaches zero, so $\widetilde{f}$ does approach $f$ in the compact-open topology.
\end{proof}
\end{prop}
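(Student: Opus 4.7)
The plan is to start from the approximation theorem for malleable varieties, obtain a close regular approximation $h$ of $f$ with no interpolation condition, and then bend $h$ back onto the prescribed values on $Z\cap C$ using the strong dominating spray, in the spirit of a straightening argument.

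More precisely, by Lemma \ref{ball_in_M} I may assume $Y\times\mathbb{B}(0,1)\subset M$. By Proposition \ref{smalleable=>malleable}, $Y$ is malleable, so \cite[Theorem 1.14]{Banecki+Kucharz} applied to the pair $(f,g)$ (which are assumed homotopic) produces regular $h:C\to Y$ arbitrarily close to $f$ in the compact-open topology. I would take $h$ close enough to $f$ that $(h(x),f(x))\in N$ for every $x\in Z\cap C$, using compactness of $Z\cap C$ and the fact that $N$ is a Zariski-open neighbourhood of the diagonal containing $(f,f)(Z\cap C)$. Then $\varphi(x):=\tau(h(x),f(x))$ is a regular mapping $Z\cap C\to\mathbb{R}^n$, because both $h|_{Z\cap C}$ and $f|_{Z\cap C}$ are regular by the hypothesis and construction.

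Next, I would apply Lemma \ref{extension_lem} to extend $\varphi$ to a regular mapping $\psi:C\to\mathbb{R}^n$ with
\[
\sup_{x\in C}\|\psi(x)\|\leq 2\sup_{x\in Z\cap C}\|\tau(h(x),f(x))\|=:2\varepsilon.
\]
By choosing $h$ even closer to $f$, I can force $\varepsilon<1/2$, so $\|\psi(x)\|<1$ everywhere on $C$, whence $(h(x),\psi(x))\in Y\times\mathbb{B}(0,1)\subset M$ for all $x\in C$. Hence
\[
\widetilde{f}:=\sigma(h,\psi):C\to Y
\]
is a well-defined regular mapping. On $Z\cap C$ we have $\psi=\varphi=\tau(h,f)$, and the spray relation $\sigma(y,\tau(y,z))=z$ gives $\widetilde{f}|_{Z\cap C}=f|_{Z\cap C}$.

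Finally, for convergence: as $h\to f$ in the compact-open topology, continuity of $\tau$ on $N$ forces $\varepsilon\to 0$, hence $\psi\to 0$ uniformly on $C$, and since $\sigma(y,0)=y$, continuity of $\sigma$ on the compact sets of interest yields $\widetilde{f}\to f$ uniformly on compacta. The main subtlety I anticipate is the interplay between the purely existential approximation from \cite[Theorem 1.14]{Banecki+Kucharz} (which gives no interpolation) and the requirement to extend $\varphi$ to all of $C$ while keeping it small; this is exactly why Lemma \ref{extension_lem} with its explicit bound $\sup\|\psi\|\leq 2\sup\|\varphi\|$ is indispensable, and why the assumption that $Z\cap C$ is compact cannot be dispensed with at this step.
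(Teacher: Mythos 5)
Your proposal is correct and follows essentially the same route as the paper: approximate via malleability, pull the discrepancy on $Z\cap C$ back through $\tau$, extend it with the bounded extension lemma, and push forward through $\sigma$. You in fact spell out a few details the paper leaves implicit (why $(h,\psi)$ lands in $M$, why $\varphi$ is regular), so nothing is missing.
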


\section{$\CC^\infty$ approximation}

We now head towards the proof of Theorem \ref{smooth_app_thm}.
\begin{lem}\label{C^inf_extension_lem}
Let $X$ be a non-singular affine variety, $D_0\subset X$ an open set, $Z\subset X$ a Zariski closed subvariety of $X$ which is faithful at every point of $Z\cap D_0$ (see Remark \ref{faith} for the definition of faithfulness) and $f:D_0\rightarrow \R^n$ a smooth bounded mapping such that $f\vert_{Z\cap D_0}$ is constantly equal to $0$. Then $f$ can be approximated in the $\CC^\infty$ topology by restrictions $\widetilde{f}\vert_{D_0}$ of regular mappings $\widetilde{f}:X\rightarrow\R^n$, which satisfy
\begin{enumerate}
    \item $\widetilde{f}$ vanishes on $Z$,
    \item $\sup_{x\in X}||\widetilde{f}(x)||\leq 2\sup_{x\in D_0}||f(x)||$.
\end{enumerate}
\begin{proof}
First of all, without loss of generality we might substitute for $X$ its non-singular algebraic compactification (which exists by the virtue of resolution of singularities) to assume that $X$ is compact.

To avoid the trivial case we assume that $f$ is not constanly equal to zero. Fix a relatively compact open subset $D_1\subset D_0$ and another relatively compact open subset $D_2\subset D_1$. According to \cite[Proposition 4.1]{Banecki+Kucharz}, $f$ can be approximated in the $\mathcal{C}^\infty$ topology by restrictions of regular mappings on $X$ which vanish on $Z$. Let $h:X\rightarrow \R^n$ be such a mapping, we choose it to be so close to $f$ that
\begin{equation*}
    \sup_{x\in D_1} ||h(x)||<2\sup_{x\in D_0}||f(x)||.
\end{equation*}

It is easy to construct, using for example Urysohn's lemma and Whitney approximation theorem, a smooth function $g:X\rightarrow [0,1]$, such that
\begin{align*}
    g(x)&=1  \text{ for }x\in D_2, \\
    g(x)&=0  \text{ for }x\in X\backslash D_1.
\end{align*}
Let $\widetilde{g}:X\rightarrow \R$ be a close regular approximation of $g$ in the $\mathcal{C}^\infty$ topology, which exists thanks to a version of the Stone-Weierstrass theorem. Finally define $\widetilde{f}:=h\widetilde{g}$. If $\widetilde{g}$ is sufficiently close to $g$ then indeed
\begin{equation*}
    \sup_{x\in X}||\widetilde{f}(x)||\leq 2\sup_{x\in D_0}||f(x)||.
\end{equation*}
On the other hand $\widetilde{f}\vert_{D_2}$ approximates $f\vert_{D_2}$ in the $\mathcal{C}^\infty$ topology, so as $D_2$ is an arbitrary relatively compact open subset of $D_0$ the proof is finished.
\end{proof}
\end{lem}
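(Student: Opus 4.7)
The plan is to reduce to the compact case and then combine a known regular $\CC^\infty$-approximation of smooth maps vanishing on a faithful subvariety with a smooth cut-off that enforces the global sup-norm bound.

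First I would replace $X$ by a non-singular algebraic compactification obtained via resolution of singularities. Any regular mapping defined on the compactification restricts to a regular mapping on $X$, faithfulness is a local condition at points of $Z\cap D_0$, and the desired conclusion transfers by restriction, so we may assume $X$ is compact. This makes both $h$ (to be constructed below) and the cut-off factor automatically bounded on $X$.

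Now fix nested relatively compact open sets $D_2\subset\overline{D_2}\subset D_1\subset\overline{D_1}\subset D_0$. Apply \cite[Proposition 4.1]{Banecki+Kucharz}, whose hypothesis is precisely the faithfulness of $Z$ at every point of $Z\cap D_0$, to obtain a regular mapping $h:X\to\R^n$ vanishing on $Z$ that approximates $f$ arbitrarily closely in the $\CC^\infty$ topology on $D_1$; tighten the approximation so that $\sup_{x\in D_1}||h(x)||<2\sup_{x\in D_0}||f(x)||$. In parallel, use Urysohn's lemma and smooth partitions of unity to build $g:X\to[0,1]$ with $g\equiv 1$ on $D_2$ and $g\equiv 0$ outside $D_1$, and then use a Stone-Weierstrass type theorem for compact non-singular varieties to approximate $g$ uniformly on $X$ (and in $\CC^\infty$ on $D_2$) by a regular $\widetilde{g}:X\to\R$. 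Set $\widetilde{f}:=h\widetilde{g}$: it is regular, it vanishes on $Z$ because $h$ does, it is close to $f$ in $\CC^\infty$ on $D_2$ because there $\widetilde{g}\approx 1$ and $h\approx f$, and outside $D_1$ it is close to $0$ because $\widetilde{g}\approx 0$ while $h$ is bounded.

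The main technical point is coordinating the two approximation errors so that the global bound $\sup_{x\in X}||\widetilde{f}(x)||\leq 2\sup_{x\in D_0}||f(x)||$ holds on all of $X$, not merely on $D_1$. This forces $\widetilde{g}$ to be uniformly very close to $0$ outside $D_1$, with the margin dictated by the possibly large global sup of $h$; but since $h$ is chosen first and $X$ is compact (so $||h||_\infty$ is a finite, fixed quantity), we only need sufficient uniform closeness of $\widetilde{g}$ to the compactly supported $g$, which Stone-Weierstrass supplies. The resulting $\widetilde{f}|_{D_0}$ gives the desired $\CC^\infty$-approximation of $f$, and because $D_2\subset D_0$ was an arbitrary relatively compact open subset, this yields approximation in the weak $\CC^\infty$ topology on $D_0$.
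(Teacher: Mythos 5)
Your proposal is correct and follows essentially the same route as the paper's own proof: compactify $X$, apply \cite[Proposition 4.1]{Banecki+Kucharz} to get a regular $h$ vanishing on $Z$, multiply by a regular approximation $\widetilde{g}$ of a smooth cut-off supported in $D_1$ and equal to $1$ on $D_2$, and conclude by letting $D_2$ exhaust $D_0$. The only addition is your explicit remark on tuning the uniform closeness of $\widetilde{g}$ to $g$ against the fixed finite $\sup_X\lVert h\rVert$, which the paper leaves implicit.
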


\begin{proof}[Proof of Theorem \ref{smooth_app_thm}]
We prove the stronger version of Theorem \ref{smooth_app_thm} from Remark \ref{faith}. Let $X,Y,C,Z,D,f:C\rightarrow Y$ be as in the theorem. We know from Theorem \ref{urr_equivalences} that $Y$ is strongly malleable, so let $(n,M,N,\sigma,\tau)$ be a strong dominating spray over $Y$. Thanks to Lemma \ref{ball_in_M}, we assume $Y\times \mathbb{B}(0,1)\subset M$. Choose a relatively compact open subset $D_0\subset D$. Applying Proposition \ref{cont_app_prop}, we can find a close regular approximation $h$ of $f$ in the compact-open topology which agrees with $f$ on $Z\cap C$. If the approximation is close enough, then $(h,f)(D_0)\subset N$, so it makes sense to define
\begin{align*}
    \varphi&:D_0\rightarrow \R^n \\
    \varphi(x)&:=\tau(h(x),f(x)).
\end{align*}
Now define
\begin{equation*}
    \varepsilon:=\sup_{x\in D_0} ||\varphi(x)||.
\end{equation*}
Again assuming that $h$ is sufficiently close to $f$ we infer $\varepsilon<1/2$. We can now apply Lemma \ref{C^inf_extension_lem} to $\varphi$ to find a regular mapping $\psi:X\rightarrow\R^n$, vanishing on $Z$, such that $\psi\vert_{D_0}$ closely approximates $\varphi$ in the $\CC^\infty$ topology and satisfying
\begin{equation*}
    \sup_{x\in X} ||\psi(x)||<2\varepsilon.
\end{equation*}
Finally consider
\begin{align*}
    \widetilde{f}&:C\rightarrow Y, \\
    \widetilde{f}&:=\sigma(h,\psi).
\end{align*}
It is a well defined mapping agreeing with $f$ on $Z\cap C$. Moreover, its restriction to $D_0$ is a close approximation of $f\vert_{D_0}$ in the $\CC^\infty$ topology, and simultaneously it is close to $f$ on $C$ in the compact-open topology. As $D_0$ was an arbitrary relatively compact open subset of $D$, the proof is finished.
\end{proof}

\section{Counterexamples}\label{sec:examples}
Let us begin by constructing the example mentioned in Section \ref{sec:intro}, of a malleable variety which is not retract rational. The construction is based on the following two results:

\begin{obs}\label{rr_connected}
Let $Y\subset \R^n$ be a compact non-singular retract rational real affine variety. Then it is connected.
\end{obs}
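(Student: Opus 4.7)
The key idea is to enlarge the retraction $r$, using the standard extension theorem for rational maps into proper targets, to a regular mapping defined on the complement of a Zariski closed set of codimension at least two in $\R^m$; as such a complement is Euclidean connected, so is its image in $Y$, which will turn out to be Euclidean dense.

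Begin by choosing a Zariski open and dense subset $V\subset Y$ that is globally retract rational, together with regular mappings $i:V\rightarrow W$ and $r:W\rightarrow V$ satisfying $r\circ i=\id_V$, where $W$ is Zariski open in $\R^m$. Repeating the argument from the proof of Observation \ref{urr=>nonsingular}, $V$ is Zariski irreducible: any nontrivial decomposition $V=V_1\cup V_2$ into Zariski closed subsets would split $W$ as $W=r^{-1}(V_1)\cup r^{-1}(V_2)$, contradicting the Zariski irreducibility of $W$ (which is Zariski dense in $\R^m$). Consequently $Y$ is irreducible and hence pure-dimensional, and the strict inequality $\dim(Y\setminus V)<\dim Y$ ensures that $V$ is Euclidean dense in the smooth manifold $Y$.

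Since $Y\subset\R^n$ is compact, it is closed and bounded in $\R^n$ and therefore coincides with its own Zariski closure inside $\mathbb{P}^n(\R)$; in particular $Y$ is a projective real variety. Viewing $r$ as a rational mapping from $\R^m$ to $Y$, the classical extension theorem for rational maps from a smooth source to a proper target produces a Zariski closed subset $I\subset\R^m$ of codimension at least two such that $r$ extends to a regular mapping $\widetilde{r}:\R^m\setminus I\rightarrow Y$. Because $I$ has real dimension at most $m-2$ (with $I$ trivially empty when $m\leq 1$), the complement $\R^m\setminus I$ is Euclidean connected, so its image $\widetilde{r}(\R^m\setminus I)\subset Y$ is a Euclidean connected subset containing $V=r(W)$. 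Its Euclidean closure in $Y$ is thus both connected and dense in $Y$, and consequently equals $Y$, which shows that $Y$ is Euclidean connected.

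The main subtlety I foresee is making sure that the codimension-two extension theorem applies in the real algebraic setting --- it does, via the valuative criterion of properness for real schemes --- together with checking that $\widetilde{r}$ truly takes values in the affine part $Y\subset\R^n$ rather than escaping to the hyperplane at infinity of $\mathbb{P}^n(\R)$; this last point is where compactness of $Y$ is essential.
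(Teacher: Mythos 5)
Your argument is correct and follows essentially the same route as the paper: extend $r$ across a Zariski closed set of codimension at least two, use connectedness of the complement in $\R^m$, and conclude from non-singularity that $V$ is Euclidean dense so the whole of $Y$ lies in one component (the paper phrases this last step in terms of central points rather than density, but it is the same use of non-singularity). The only cosmetic difference is that you justify the codimension-two extension by properness of the compact target via the valuative criterion, whereas the paper invokes boundedness of $r$ viewed as a rational map into $\R^n$; for compact $Y$ these amount to the same thing.
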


\begin{prop}\label{cubic_is_malleable}
Let $n\geq 3$ and let $P\in\R[y_1,\dots,y_{n+1}]$ be a cubic homogeneous irreducible polynomial such that the complex hypersurface $Y^\mathbb{C}\subset \mathbb{C}\mathbb{P}^n$ given by $P(y)=0$ is non-singular. Assume that the real variety $Y=Y^\mathbb{C}\cap \R\mathbb{P}^n$ is non-empty. Then it is malleable.
\end{prop}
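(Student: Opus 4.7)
The approach is to construct a dominating spray on $Y$, which will yield malleability by \cite[Definition 2.1]{real_homogeneous}. The central tool is the classical chord construction: for two distinct real points $y_1,y_2\in Y$ such that the projective line $\overline{y_1y_2}$ is not contained in $Y$, the restriction of $P$ to this line is a real cubic polynomial with two prescribed real roots $y_1,y_2$, so its third root is automatically real, producing a third intersection point $r(y_1,y_2)\in Y$. Expanding $P(y_1+t(y_2-y_1))$ in $t$ and using $P(y_1)=0$ gives the explicit rational formula
\begin{equation*}
    r(y_1,y_2)=y_1+\frac{dP_{y_1}(y_2-y_1)}{P_3(y_2-y_1)}(y_2-y_1),
\end{equation*}
where $P_3$ is the degree-three homogeneous part of $P$; this is regular on a Zariski open subset of $Y\times Y$ away from the diagonal and from the (Zariski closed) locus of lines contained in $Y$.

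To upgrade the chord map into a spray, observe how the formula behaves near the diagonal $\Delta_Y$. Substituting $y_2=y_1+\epsilon v$ with $v\in T_{y_1}Y$ and exploiting $P(y_1+\epsilon v)=0$ to leading order in $\epsilon$ gives the tangent chord limit $r(y_1,y_1+\epsilon v)\to y_1-(d^2P_{y_1}(v,v)/(2P_3(v)))v$, which depends only on the projective class of $v$. This produces a well-defined regular morphism from the blow-up of $Y\times Y$ along $\Delta_Y$ to $Y$, or equivalently from the projectivised tangent bundle $\mathbb{P}(TY)$ to $Y$. My plan to realise a genuine dominating spray is to fix an auxiliary regular map $\gamma\colon Y\dashrightarrow Y$ with $\gamma(y)\neq y$ generically (obtained by applying the chord map with a fixed smooth point $y_*\in Y$), and to define a local family $s(y,v):=r(y,\gamma(y)+\lambda(y,v)v)$, where $\lambda(y,v)$ is a regular scalar chosen so that $\gamma(y)+\lambda(y,v)v\in Y$; such $\lambda$ exists on a Zariski open neighbourhood of the zero section by the implicit-function principle applied to the cubic equation. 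These local sprays are then patched together using the gluing machinery of Section~\ref{sec:gluing}.

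The main obstacle is to recognise the resulting rational expression as a regular morphism with $s(y,0)=y$ and fibrewise-surjective derivative. The scaling singularity of the tangent chord must be absorbed by a rational substitution exactly in the style of Lemma~\ref{gluing_lem1}: the numerator $dP_{y_1}(u)$ vanishes to first order along the diagonal while the denominator $P_3(u)$ vanishes to third order, and compensating this imbalance requires choosing the parameter variable with a denominator that cancels the excess factors. The dimension hypothesis $n\geq 3$, equivalent to $\dim Y\geq 2$, is essential: it ensures that through each real smooth point of $Y$ there pass sufficiently many chord directions to make the spray derivative surjective onto $T_yY$, via the non-degeneracy of the second fundamental form of the smooth cubic. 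For the cubic curve case $n=2$ the claim would fail: a smooth real cubic curve has genus one and admits no nonconstant rational parameterisation, hence no dominating spray.
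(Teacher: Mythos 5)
Your starting point --- the third-intersection (chord) map on the cubic --- is the same as the paper's, which follows \cite{Kollár_2002} in defining $\phi(y,z)$ as the residual intersection of the line through $y$ and $z$ with $Y$. But the way you turn this into a dominating spray has concrete gaps. First, your candidate spray $s(y,v):=r(y,\gamma(y)+\lambda(y,v)v)$ with $\gamma(y)=r(y_*,y)$ fails the base-point condition: the line through $y$ and $\gamma(y)$ is the original line through $y_*$ and $y$, so $s(y,0)=r(y,\gamma(y))=y_*$ is \emph{constant}, not $y$. The paper avoids this by perturbing the \emph{first} argument: $\sigma(z,v)=\phi(\psi(v),\phi(y_*,z))$ with $\psi(0)=y_*$, so that $\sigma(z,0)=\phi(y_*,\phi(y_*,z))=z$. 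Second, the scalar $\lambda(y,v)$ with $\gamma(y)+\lambda(y,v)v\in Y$ is not a regular function of $(y,v)$: after factoring out the known root $\lambda=0$ of the cubic equation $P(\gamma(y)+\lambda v)=0$ one is left with a quadratic, whose nonzero roots involve a square root of the discriminant; the implicit-function principle only yields a Nash or analytic branch, not a regular one (and the branch through $\lambda=0$ is useless, since you need $\lambda\neq 0$ for $v$ to act). Restricting $v$ to tangent directions at $\gamma(y)$ makes $\lambda$ rational but collapses the image to the tangent hyperplane section, and proving that the resulting family still dominates $Y$ with surjective derivative is exactly the hard content of real unirationality of cubics. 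The paper does not redo this: it quotes Kollár's theorem \cite[Theorem 1]{Kollár_2002} to get unirationality, converts it via \cite[Proposition 1.5]{dominating} into a single regular map $\psi:\R^m\rightarrow Y$ submersive at $0$, and transports submersivity around $Y$ using the local isomorphisms $w\mapsto\phi(w,\phi(y_*,z))$. Your appeal to non-degeneracy of the second fundamental form does not substitute for this.

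The final patching step is also wrong as stated. The gluing machinery of Section~\ref{sec:gluing} concerns \emph{strong} dominating sprays and, if it applied, would prove that $Y$ is strongly malleable, hence uniformly retract rational --- contradicting the very purpose of Proposition~\ref{cubic_is_malleable}, which is used to exhibit a disconnected compact cubic surface that is malleable but \emph{not} retract rational (Observation~\ref{rr_connected}). The sprays produced here are only dominating sprays in the sense of \cite[Definition 2.1]{real_homogeneous}, and the correct gluing statement is \cite[Theorem 6.3]{Banecki+Kucharz}. Even then, to apply it one must cover all of $Y$ by malleable Zariski open sets, and your construction only produces one such set $V$; the paper obtains the rest from quasi-homogeneity of $Y$, i.e.\ the fact that $w\mapsto\phi(w,\phi(y,z))$ is a local isomorphism carrying $y$ to $z$ for $(y,z)\in U$, together with the preliminary claim that every $y\in Y$ admits some $z$ with $(y,z)\in U$ (proved by a projection argument, which is also where $n\geq 3$ and the irreducibility of $P$ actually enter). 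This covering step is absent from your proposal.
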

\begin{rem}
This a real algebraic version of the corresponding result \cite[Theorem 1.1]{counterex_new} in the complex case.
\end{rem}

Given the two results, it is not difficult to come up with an example of an irreducible malleable variety which is not retract rational:
\begin{ex}
The surface in $\R\mathbb{P}^3$ given by $w(x^2+y^2)-z(z-w)(z-2w)=0$ is malleable, yet it is not retract rational.
\begin{proof}
One easily checks that it is disconnected, yet it satisfies the assumptions of Proposition \ref{cubic_is_malleable}.
\end{proof}
\end{ex}

\begin{proof}[Proof of Observation \ref{rr_connected}]
Let $V\subset Y$ be a Zariski open and dense subset such that there exists a Zariski open set $W\subset \R^m$ and two regular mappings $i:V\rightarrow W$ and $r:W\rightarrow V$ satisfying $r\circ i=\id_V$. The mapping $r$, considered as a mapping into $\R^n$ is rational and bounded, so it admits an extension $\widetilde{r}:\widetilde{W}\rightarrow Y$ to a Zariski open set $\widetilde{W}\subset \R^m$ satisfying $\codim(\R^m\backslash\widetilde{W})>1$. The set $\widetilde{W}$ is connected, so the image of $\widetilde{r}$ is contained in a connected component of $Y$. Then $V$ is also contained in a connected component of $Y$. If there existed another connected component of $Y$, it would not be Zariski dense in $Y$, so it would have to consist of non-central points. This contradicts non-singularity of $Y$.
\end{proof}

\begin{proof}[Proof of Proposition \ref{cubic_is_malleable}]
Define
\begin{multline*}
    U:=\{(y,z)\in Y\times Y:\\
    y\neq z\text{ and the line connecting $y$ and $z$ is neither tangent to $y$ nor $z$}\}.
\end{multline*}
The set $U$ is Zariski open. We claim that for every $y\in Y$, there exists some $z\in Y$ such that $(y,z)\in U$. Suppose this is not true for some $y\in Y$ and consider the projection 
\begin{equation*}
    \pi:\R\mathbb{P}^n\backslash\{y\}\rightarrow \R\mathbb{P}^{n-1}.
\end{equation*}
From the assumption it follows that for $z\in Y$ we either have that $z$ lies in the tangent space $T_y Y$ to $Y$ at $y$ or it is a singular point of $\pi\vert_{Y\backslash \{y\}}$. As the image of $T_y Y\backslash\{y\}$ through $\pi$ is a hyperplane, it follows that $\pi(Y\backslash\{y\})$ is of semialgebraic dimension at most $n-2$. Hence a generic fibre of $\pi\vert_{Y\backslash\{y\}}$ is one-dimensional. This implies that $Y$ is contained in $T_y Y$, so $Y$ is a hyperplane. Now as $P$ is of degree three it is reducible which is a contradiction.

Following \cite{Kollár_2002}, consider the regular mapping $\phi:U\rightarrow Y$ which to a pair of points of $Y$ associates the third intersection of the line connecting them with $Y$. Note that for $(y,z)\in U$ it satisfies $\phi(y,z)=\phi(z,y)$, and the mapping $w\mapsto \phi(w,\phi(y,z))$ maps $y$ to $z$ and it is an isomorphism between a neighbourhood of $y$ and a neighbourhood of $z$, with its inverse given by itself.
As $Y$ is irreducible, this in particular shows that it is quasi-homogeneous in the sense that $\RR(Y)_y$ is isomorphic to $\RR(Y)_z$ for every two points $y,z\in Y$.

Thanks to \cite[Theorem 1]{Kollár_2002}, $Y$ is unirational. By \cite[Proposition 1.5]{dominating} this is equivalent to an existence of a regular mapping $\psi:\R^m\rightarrow Y$ submersive at $0$. Let $y:=\psi(0)$ and let $V:=\{z\in Y:(z,y)\in U\}$. Define
\begin{align*}
    \sigma:V\times \R^m\dashrightarrow V \\
    \sigma(z,v):=\phi\left(\psi(v),\phi(y,z)\right)
\end{align*}
As noticed before, the mapping $w\mapsto \phi(w,\phi(y,z))$ is an isomorphism between a neighbourhood of $y$ and a neighbourhood of $z$, so $v\mapsto \sigma(z,v)$ is defined on a neighbourhood of $0$, it takes $0$ to $z$ and it is a submersion at $0$. Reasoning as in the proof of \cite[Proposition 6.2]{Banecki+Kucharz} we can now find a regular mapping $\varepsilon:V\rightarrow \R$ such that the triple $(V\times \R^n,p,s)$ is a dominating spray over $V$, where $p$ is the projection onto the first coordinate and $s$ is defined by
\begin{equation*}
    s(z,v):=\sigma\left(z,\varepsilon(z)\frac{v}{1+||v||^2}\right).
\end{equation*}
This shows that $V$ is a malleable variety. As $Y$ is quasi-homogeneous, it follows that each point of $Y$ admits a Zariski open neighbourhood which is malleable. It now follows from \cite[Theorem 6.3]{Banecki+Kucharz} that $Y$ is malleable.
\end{proof}

We now proceed to provide a few counterexamples showing ways in which Theorems \ref{cont_app_thm} and \ref{smooth_app_thm} cannot be improved further.

As shown in the following two examples, the assumptions about $C$ being closed and semialgebraic in Theorem \ref{smooth_app_thm} are essential. However, it is not clear whether the assumption that $Z\cap C$ is compact can be dropped.

\begin{ex}
In this example all assumptions of Theorem \ref{cont_app_thm} are satisfied except for closedness of $C$. Take $X:=\R^2,Y:=\R\backslash \{0\}, Z:=\R\times \{0\}\subset X$ and 
\begin{equation*}
    C:=\{(x,y)\in \R^2: x<0, y<0\}\cup \{(x,y)\in \R^2:1\leq x\leq 2, y\leq 0\}.
\end{equation*}
If we define $f:C\rightarrow Y$ as
\begin{equation*}
    f(x,y)=
    \begin{cases}
        1 & x\leq 0 \\
        x & x>0
    \end{cases}
\end{equation*}
then $f$ is homotopically trivial, and $f\vert_{Z\cap C}$ is equal to the projection onto the first variable. On the other hand, if $\widetilde{f}:C\rightarrow Y$ is a regular approximation of $f$ equal to $f$ on $Z\cap C$, then by definition it extends to a Zariski open neighbourhood $U$ of $C$. This is impossible, as on the one hand $\widetilde{f}$ maps $C$ into the component $x>0$ of $Y$ and yet it maps every point of $Z\cap U\cap \partial C\neq \emptyset$ into the other component.
\end{ex}

\begin{ex}
In this example all assumptions of Theorem \ref{cont_app_thm} are satisfied except for semialgebraicity of $C$. Take $X:=\R^2\backslash\{(0,0)\},Y:=\R\backslash \{0\}, Z:=\R\times \{0\}\backslash\{(0,0)\}\subset X$ and 
\begin{equation*}
    C:=\{(x,y)\in \R^2\backslash\{(0,0)\}: x\leq 0, y\leq -e^{\frac{1}{x}}\}\cup \{(x,y)\in \R^2:1\leq x\leq 2, y\leq 0\}.
\end{equation*}
Note that $C$ is constructed in such a way that every open semialgebraic set containing a segment of the form $\{(x,y)\in\R^2: -\varepsilon<x<0,y=0\}$ intersects $C$ non-vacuously.

If we again define $f:C\rightarrow Y$ as
\begin{equation*}
    f(x,y)=
    \begin{cases}
        1 & x\leq 0 \\
        x & x>0
    \end{cases}
\end{equation*}
then $f$ is homotopically trivial, and $f\vert_{Z\cap C}$ is equal to the projection onto the first variable. Once again, if $\widetilde{f}:C\rightarrow Y$ is a regular approximation of $f$ equal to $f$ on $Z\cap C$, then by definition it extends to a Zariski open neighbourhood $U$ of $C$. This is again impossible, as the set $\widetilde{f}^{-1}(\{x<0\})$ is semialgebraic and contains a small segment of the form $\{-\varepsilon<x<0,y=0\}$, and on the other hand it has empty intersection with $C$ as the set $C$ gets mapped into the component $\{x>0\}$ of $Y$.
\end{ex}

It is worth noting that in Theorems \ref{smooth_app_thm} and \ref{cont_app_thm}, unlike in the case of malleable varieties (see the generalised approximation-interpolation property in \cite[Definition 1.5]{Banecki+Kucharz} for comparison), we cannot expect approximation by mappings extending to the entire variety $X$ even if $g$ does extend this way as a regular mapping:

\begin{ex}
Take $X=\R^2$, $Z$ and $Y$ both equal to the unit circle $\s^1$ centered at the origin of $\R^2$ and $C$ equal to the closed ball of radius $\frac{1}{2}$ centered at $(1,0)$. Take $f$ to be the radial projection onto the circle, restricted to $C$. The mapping is homotopic to a regular one as in fact it is null homotopic. It is also equal to the identity when restricted to $C\cap \s^1$. Nonetheless it cannot be approximated by mappings regular on $\R^2$ equal to the identity on $\s^1$ since such mappings do not exist for topological reasons.
\end{ex}

Lastly, in Theorem \ref{smooth_app_thm}, even if $C$ is a smooth manifold with boundary and $f:C\rightarrow Y$ is a smooth mapping, we cannot expect the regular mappings to approximate $f$ in the $\CC^\infty$ topology on the entire set $C$. In the following example the approximation cannot be even of class $\CC^1$ at a particular boundary point of $C$:
\begin{ex}
Take $X:=\R^2$, $Y:=\R$, $Z:=\R\times\{0\}$ and
\begin{equation*}
    C:=\overline{\B}((0,1),1)\cup \overline{\B}((3,0),1).
\end{equation*}
Define $f$ as 
\begin{equation*}
    f(x,y):=
        \begin{cases}
            x & x\leq 1 \\
            0 & x>1
        \end{cases}
\end{equation*}
Now, given any regular function $\widetilde{f}:C\rightarrow \R$, vanishing on $Z\cap C$, its partial derivative $\pdv{\widetilde{f}}{x}$ must also vanish on $Z\cap C$, in particular it must vanish at the origin. Hence it cannot approximate $f$ in the $\CC^\infty$ topology, as $\pdv{f}{x}(0,0)\neq 0$.
\end{ex}

\section{Acknowledgements}
The author was partially supported by the National Science Centre (Poland) under grant number 2022/47/B/ST1/00211. 

\printbibliography

\end{document}